\newtheorem{prop}{Proposition}
\newtheorem{lem}[prop]{Lemma}
\begin{document}
\title{Some points on Vervaat's transform of Brownian bridges and Brownian motion}
\author{Titus Lupu}
\email{titus.lupu@u-psud.fr}
\begin{abstract}
In a recent work J. Pitman and W. Tang defined the Vervaat's transform for a Brownian bridge with two different endpoints and for a Brownian motion between times $0$ and $1$. They proved some path decomposition properties for these Vervaat's transforms and raised open questions on their semi-martingale decomposition. In this paper we give an alternative proof for the path decomposition and answer the open questions.
\end{abstract}
\maketitle

\section{Introduction}
Given a continuous function $f$ on $[0,1]$ and $\tau(f)$ the first time it attains its minimum, Pitman and Tang define in \cite{PitmanTang2013Vervaat} the Vervaat's transform of $f$ as:
\begin{displaymath}
V(f)(t):=f(\tau(f)+t \mod 1)+f(1)1_{\lbrace t+\tau(f)\geq 1\rbrace}-f(\tau(f))
\end{displaymath}
They apply this transform to the following three cases:
\begin{itemize}
\item (i) a Brownian bridge $(B^{\lambda,br}_{t})_{0\leq t\leq 1}$ from $0$ to $\lambda$ with $\lambda<0$
\item (ii) a Brownian bridge $(B^{\lambda,br}_{t})_{0\leq t\leq 1}$ from $0$ to $\lambda$ with $\lambda>0$
\item (iii) a Brownian path $(B_{t})_{0\leq t\leq 1}$
\end{itemize}
For the case (i) they show the following path decomposition: given $Z$ a r.v. on $(0,1)$ with distribution:
\begin{displaymath}
\dfrac{\vert\lambda\vert}{\sqrt{2\pi t(1-t)^{3}}}\exp\left(-\dfrac{\lambda^{2}t}{2(1-t)}\right)\,1_{0<t<1}dt
\end{displaymath}
$V(B^{\lambda,br})$ can be decomposed as a positive Brownian excursion on $[0,Z]$ and a first passage bridge from $0$ to $\lambda$ on $[Z,1]$, independent conditionally on $Z$. The proof of the decomposition studies first the case of bridges of random walks then takes the weak limit. In case (ii) there is a similar decomposition derived from case (i) by time reversal: first a time-reversed first passage bridge from $\lambda$ to $0$ then a positive excursion above level $\lambda$. Further Pitman and Tang prove that in all three cases, the Vervaat transformed paths are semi-martingales, identify the decomposition in local martingale and finite variation process in case (i) and leave as an open question the decomposition in case (ii) and (iii).

In this paper we give an other proof of the path decomposition of $V(B^{\lambda,br})$ and identify the semi-martingale decomposition of Vervaat transformed paths in cases (ii) and (iii).

\section{Alternative proof of path decomposition of a Vervaat's transform of a Brownian bridge with arbitrary endpoints}

Let $\lambda<0$. Our proof of path decomposition of $V(B^{\lambda,br})$ relies on the decomposition of bridges at their minimum, similar to the decomposition at the maximum that appears in \cite{PitmanYor1996MaximumBridge}. Let $p_{T}(x,y)$ be the heat kernel
\begin{displaymath}
p_{T}(x,y)=\dfrac{1}{\sqrt{2\pi T}}\exp\left(-\dfrac{(y-x)^{2}}{2T}\right)
\end{displaymath}
Let $\mathbb{P}^{T}_{0,\lambda}$ the law of the Brownian bridge from $0$ to $\lambda$ of length $T$ and for $y<x$ let $\mathbb{P}_{x}^{T_{y}}$ the law of the Brownian path starting from $x$ until the first time it hits $y$. Given a distribution $Q$ on paths, $Q^{\wedge}$ will denote its image by time reversal. Given $Q$ and $Q'$ two distributions on paths, $Q\circ Q'$ will be the distribution obtained by concatenating two independent paths, one following the distribution $Q$ and the other the distribution $Q'$. According to corollary $3$ in \cite{PitmanYor1996MaximumBridge}:
\begin{displaymath}
\int_{0}^{+\infty}dT\,p_{T}(0,\lambda)\mathbb{P}^{T}_{0,\lambda}=2\int_{-\infty}^{\lambda}
dy\,\mathbb{P}_{0}^{T_{y}}\circ\mathbb{P}_{\lambda}^{T_{y}\wedge}
\end{displaymath}
$\mathbb{P}_{0}^{T_{y}}$ can be decomposed as:
\begin{displaymath}
\mathbb{P}_{0}^{T_{y}}=\mathbb{P}_{0}^{T_{y-\lambda}}\circ\mathbb{P}_{y-\lambda}^{T_{y}}
\end{displaymath}
Thus:
\begin{equation}
\label{Eq1}
\int_{0}^{+\infty}dT\,p_{T}(0,\lambda)\mathbb{P}^{T}_{0,\lambda}=2\int_{-\infty}^{\lambda}
dy\,\mathbb{P}_{0}^{T_{y-\lambda}}\circ
\mathbb{P}_{y-\lambda}^{T_{y}}\circ\mathbb{P}_{\lambda}^{T_{y}\wedge}
\end{equation}
We extend the definition of Vervaat's transform to continuous path with finite but arbitrary life-time: Given a continuous function $f$ on $[0,T]$ and $\tau(f)$ the first time it attains its minimum, define:
\begin{displaymath}
V(f)(t):=f(\tau(f)+t \mod T)+f(T)1_{\lbrace t+\tau(f)\geq T\rbrace}-f(\tau(f))
\end{displaymath}
From identity \eqref{Eq1} one gets:
\begin{equation}
\label{Eq2}
\begin{split}
\int_{0}^{+\infty}dT\,p_{T}(0,\lambda)V_{\ast}(\mathbb{P}^{T}_{0,\lambda})&=2\int_{-\infty}^{\lambda}
dy\,\mathbb{P}_{\lambda-y}^{T_{0}\wedge}\circ\mathbb{P}_{\lambda-y}^{T_{0}}\circ
\mathbb{P}_{0}^{T_{\lambda}}\\&=
2\left(\int_{0}^{+\infty}
dy\,\mathbb{P}_{y}^{T_{0}\wedge}\circ\mathbb{P}_{y}^{T_{0}}\right)\circ
\mathbb{P}_{0}^{T_{\lambda}}
\end{split}
\end{equation}
For bridges from $0$ to $0$ identity \eqref{Eq2} becomes:
\begin{displaymath}
\int_{0}^{+\infty}dT\,p_{T}(0,0)V_{\ast}(\mathbb{P}^{T}_{0,0})=
2\int_{0}^{+\infty}
dy\,\mathbb{P}_{y}^{T_{0}\wedge}\circ\mathbb{P}_{y}^{T_{0}}
\end{displaymath}
Let $Q^{T}_{0,0}$ be the law of positive Brownian excursion of length $T$ (bridge of Bessel $3$ from $0$ to $0$). According to Vervaat's result (\cite{Vervaat1979BridgeToExc}), $V_{\ast}(\mathbb{P}^{T}_{0,0})=Q^{T}_{0,0}$. Thus
\begin{displaymath}
2\int_{0}^{+\infty}
dy\,\mathbb{P}_{y}^{T_{0}\wedge}\circ\mathbb{P}_{y}^{T_{0}}=
\int_{0}^{+\infty}dT\,p_{T}(0,0)Q^{T}_{0,0}
\end{displaymath}
By injecting the above identity in \eqref{Eq2} one gets:
\begin{equation}
\label{Eq3}
\int_{0}^{+\infty}dT\,p_{T}(0,\lambda)V_{\ast}(\mathbb{P}^{T}_{0,\lambda})=
\left(\int_{0}^{+\infty}
ds\,p_{s}(0,0)Q^{s}_{0,0}\right)\circ
\mathbb{P}_{0}^{T_{\lambda}}
\end{equation}
By disintegrating in \eqref{Eq3} with respect to the life-time of paths, on gets that $V_{\ast}(\mathbb{P}^{T}_{0,\lambda})$ is a concatenation of an excursion and a first passage bridge.

Let $1_{t>0}f_{fp,\vert\lambda\vert}(t)\,dt$ be the distribution of the first hitting time of level $\lambda$ for a Brownian motion staring from $0$.
\begin{displaymath}
f_{fp,\vert\lambda\vert}(t)=\dfrac{\vert\lambda\vert}{\sqrt{2\pi t^{3}}}
\exp\left(-\dfrac{\lambda^{2}}{2t}\right)
\end{displaymath}
From \eqref{Eq3} follows that the distribution of the point of split $Z$ between the excursion and the first passage bridge in $V_{\ast}(\mathbb{P}^{1}_{0,\lambda})$ is:
\begin{displaymath}
\dfrac{p_{t}(0,0)f_{fp,\vert\lambda\vert}(1-t)}{p_{1}(0,\lambda)}\,1_{0<t<1}dt=
\dfrac{\vert\lambda\vert}{\sqrt{2\pi t(1-t)^{3}}}\exp\left(-\dfrac{\lambda^{2}t}{2(1-t)}\right)\,1_{0<t<1}dt
\end{displaymath}

\section{Semi-martingale decomposition of the Vervaat's transform of a bridge with positive endpoint}

Let $\lambda>0$. The transformed bridge $(V(B^{\lambda,br})_{t})_{0\leq t\leq 1}$ can be decomposed in a time-reversed first passage bridge from $\lambda$ to $0$ and a positive excursion above $\lambda$. Let $\widehat{Z}$ be the position of the split between the two. The density of the distribution of $\widehat{Z}$ on $(0,1)$ is given by:
\begin{displaymath}
f_{\widehat{Z}}(t)=\dfrac{\lambda}{\sqrt{2\pi (1-t)t^{3}}}\exp\left(-\dfrac{\lambda^{2}(1-t)}{2t}\right)\,1_{0<t<1}dt
\end{displaymath}
Let $(R_{t})_{t\geq 0}$ be a Bessel $3$ process starting from $0$. We will show that for any $t\in[0,1)$, the law of $(V(B^{\lambda,br})_{s})_{0\leq s\leq t}$ is absolutely continuous with respect to the law of $(R_{s})_{0\leq s\leq t}$, identify the corresponding density $D^{\lambda}_{t}$, and deduce by applying Girsanov's theorem the semi-martingale decomposition of $(V(B^{\lambda,br})_{t})_{0\leq t\leq 1}$.

For $x,y >0$, denote
\begin{displaymath}
\tilde{q}_{t}(x,y):=\dfrac{1}{xy\sqrt{2\pi t}}\left(
\exp\left(-\dfrac{(y-x)^{2}}{2t}\right)-\exp\left(-\dfrac{(y+x)^{2}}{2t}\right)
\right)
\end{displaymath}
$\tilde{q}_{t}(x,y) y^{2}\,dy$ is the semi-group of Bessel $3$. Let
\begin{displaymath}
\tilde{q}_{t}(0,y)=\lim_{x\rightarrow 0^{+}}\tilde{q}_{t}(x,y)=
\dfrac{2}{\sqrt{2\pi t^{3}}}\exp\left(-\dfrac{y^{2}}{2t}\right)=\dfrac{2}{y}f_{fp, y}(t)
\end{displaymath}
\begin{displaymath}
\tilde{q}_{t}(0,0)=
\dfrac{2}{\sqrt{2\pi t^{3}}}
\end{displaymath}
For $x,y\geq 0$, let $Q^{t}_{x,y}$ be the law of the bridge of Bessel $3$ from $x$ to $y$ of length $t$. The first passage bridge from $x$ to $0$ of length $t$ of the Brownian motion has the law $Q^{t}_{x,0}$ (\cite{BianeYor1988PrecisionsMeandre}). Let
\begin{displaymath}
\theta^{\lambda}_{t}:=\sup\lbrace s\in[0,t]\vert R_{s}\leq \lambda\rbrace
\end{displaymath}
If $R_{t}\leq\lambda$ then $\theta^{\lambda}_{t}=t$. The density $D^{\lambda}_{t}$ will be expressed as a deterministic function of $t, R_{t}$ and $\theta^{\lambda}_{t}$.

\begin{lem}
\label{Lem1}
On the event $R_{t}>\lambda$, the joint distribution of $(R_{T},\theta^{\lambda}_{T})$ is:
\begin{multline*}
\tilde{q}_{t}(0,y)\dfrac{f_{fp,y-\lambda}(t-s)f_{fp,\lambda}(s)}{f_{fp,y}(t)}\,1_{0<s<t}\,ds\,
1_{y>\lambda}y^{2}\,dy=\\
\dfrac{2y^{2}(y-\lambda)\lambda}{\pi\sqrt{(t-s)^{3}s^{3}}}
\exp\left(-\dfrac{(y-\lambda)^{2}}{2(t-s)}-\dfrac{\lambda^{2}}{2s}\right)\,1_{0<s<t}\,ds\,1_{y>\lambda}\,dy
\end{multline*}
Conditionally on $R_{t}>\lambda$, the value of $R_{t}$ and of $\theta^{\lambda}_{t}$, the paths $(R_{s})_{0\leq s\leq \theta^{\lambda}_{t}}$ and $(R_{\theta^{\lambda}_{t}-s}-\lambda)_{0\leq s\leq t-\theta^{\lambda}_{t}}$ are independent and follow the law $Q^{\theta^{\lambda}_{t}}_{0,\lambda}$ respectively $Q^{t-\theta^{\lambda}_{t}}_{0,R_{t}-\lambda}$.
\end{lem}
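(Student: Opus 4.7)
The plan is to reduce the statement to a decomposition of a first-passage bridge at its first hit of $\lambda$, obtained by conditioning on $R_{t}$ and then reversing time. Conditional on $R_{t}=y$ with $y>\lambda$, the path $(R_{s})_{0\le s\le t}$ follows the Bessel-$3$ bridge law $Q_{0,y}^{t}$. The time-reversed process $\tilde R_{u}:=R_{t-u}$ then has law $Q_{y,0}^{t}$, which by the Biane--Yor identification recalled just before the lemma coincides with the first-passage bridge of Brownian motion from $y$ to $0$ of length $t$.

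Under this reversal one has $\theta^{\lambda}_{t}=t-\sigma$, where $\sigma:=\inf\{u\in[0,t]:\tilde R_{u}\le\lambda\}$ is the first hitting time of $\lambda$ by $\tilde R$; since $\tilde R$ starts at $y>\lambda$, ends at $0$ and is continuous, $\sigma\in(0,t)$ almost surely. Applying the strong Markov property of the first-passage bridge at $\sigma$ splits $\tilde R$ into two pieces which are independent conditional on $\sigma$: $(\tilde R_{u})_{0\le u\le \sigma}$ is the first-passage bridge from $y$ to $\lambda$ of length $\sigma$ (equivalently, after translation by $-\lambda$, the Bessel-$3$ bridge $Q_{y-\lambda,0}^{\sigma}$), while $(\tilde R_{u})_{\sigma\le u\le t}$ is the first-passage bridge $Q_{\lambda,0}^{t-\sigma}$. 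The density of $\sigma$ itself follows from the convolution identity
\begin{displaymath}
\int_{0}^{t} f_{fp,y-\lambda}(r)\,f_{fp,\lambda}(t-r)\,dr \;=\; f_{fp,y}(t),
\end{displaymath}
which expresses the decomposition of a Brownian first hit of $0$ from $y$ through an intermediate first hit of $\lambda$; it yields $\mathbb{P}(\sigma\in dr\mid R_{t}=y)=f_{fp,y-\lambda}(r)f_{fp,\lambda}(t-r)/f_{fp,y}(t)\,dr$.

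Multiplying this conditional density by the marginal density $\tilde q_{t}(0,y)\,y^{2}\,dy$ of $R_{t}$ and changing variables via $s=t-r$ produces the first line of the claimed joint density; the explicit second line is then obtained by plugging in the formulas for $\tilde q_{t}(0,\cdot)$ and $f_{fp,\cdot}$ and simplifying. Undoing the time reversal, the piece $(R_{s})_{0\le s\le \theta^{\lambda}_{t}}$ is the reversal of $Q_{\lambda,0}^{\theta^{\lambda}_{t}}$ and hence has law $Q_{0,\lambda}^{\theta^{\lambda}_{t}}$, while the second piece (translated by $-\lambda$) is the reversal of $Q_{y-\lambda,0}^{t-\theta^{\lambda}_{t}}$ and hence has law $Q_{0,R_{t}-\lambda}^{t-\theta^{\lambda}_{t}}$; their conditional independence is inherited from the strong-Markov split of $\tilde R$.

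The main obstacle will be the careful bookkeeping of the successive time reversals---first of $Q_{0,y}^{t}$ to bring the Biane--Yor identification into play, then of each of the two first-passage bridges produced by the strong-Markov split to recover Bessel-$3$ bridges---together with the systematic use of the time-reversal symmetry of Bessel-$3$ bridges. Once these reversals are handled correctly, the density computation and the strong-Markov step itself are routine.
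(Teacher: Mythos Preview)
Your proposal is correct and follows essentially the same route as the paper: condition on $R_{t}=y$, time-reverse to obtain a Brownian first-passage bridge from $y$ to $0$, split this bridge at its first hit of $\lambda$ via the strong Markov property, and read off both the density of the hitting time and the conditional laws of the two pieces. The paper's proof is more terse---it simply states the conditional density of $t-\theta^{\lambda}_{t}$ and asserts that the two pieces are independent first-passage bridges---but your more explicit bookkeeping of the reversals and the convolution identity is the same argument written out in full.
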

\begin{proof}
Let $y>\lambda$. Conditionally on $R_{t}=y$, $(R_{t-s})_{0\leq s\leq t}$ is a Brownian first passage bridge from $y$ to $0$ and $t-\theta^{\lambda}_{t}$ is the first time it hits $\lambda$. Thus conditionally on $R_{t}=y$, $t-\theta^{\lambda}_{t}$ is distributed according:
\begin{displaymath}
\dfrac{f_{fp,y-\lambda}(s)f_{fp,\lambda}(t-s)}{f_{fp,y}(t)}\,1_{0<s<t}\,ds
\end{displaymath}
Moreover conditionally on $R_{t}=y$ and on the value of $\theta^{\lambda}_{t}$, $(R_{t-s})_{0\leq s\leq t-\theta^{\lambda}_{t}}$ and $(R_{\theta^{\lambda}_{t}-s})_{0\leq s\leq \theta^{\lambda}_{t}}$ are two independent Brownian first passage bridges, from $y$ to $\lambda$ and from $\lambda$ to $0$.
\end{proof}

\begin{prop}
For any $t\in[0,1)$, the law of $(V(B^{\lambda,br})_{s})_{0\leq s\leq t}$ is absolutely continuous with respect to the law of $(R_{s})_{0\leq s\leq t}$. The corresponding density is:
\begin{equation*}
\begin{split}
D^{\lambda}_{t}=&\dfrac{\exp\left(\frac{\lambda^{2}}{2}\right)}{2R_{t}}\int_{t}^{1}\dfrac{ds}{\sqrt{2\pi(1-s)(s-t)}}\left(\exp\left(-\dfrac{(R_{t}-\lambda)^{2}}{2(s-t)}\right)-
\exp\left(-\dfrac{(R_{t}+\lambda)^{2}}{2(s-t)}\right)\right)\\+&
1_{R_{t}>\lambda}\dfrac{(1-\theta^{\lambda}_{t})(R_{t}-\lambda)}{\sqrt{(1-t)^{3}}R_{t}}\exp\left(\dfrac{\lambda^{2}}{2}\right)
\exp\left(-\dfrac{(R_{t}-\lambda)^{2}}{2(1-t)}\right)
\\:=&F^{\lambda}(t,R_{t},\theta^{\lambda}_{t})
\end{split}
\end{equation*}
\end{prop}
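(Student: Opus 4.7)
The plan is to exploit the path decomposition of $V(B^{\lambda,br})$ recalled in the introduction: conditionally on the split point $\widehat{Z}$, on $[0,\widehat{Z}]$ the path is a time-reversed first passage bridge from $\lambda$ to $0$, which has law $Q^{\widehat{Z}}_{0,\lambda}$, and on $[\widehat{Z},1]$ it is $\lambda$ plus a Bessel $3$ bridge $Q^{1-\widehat{Z}}_{0,0}$ (the positive excursion above level $\lambda$). Since the statement concerns the restriction to $[0,t]$, I split according to whether the excursion has already begun: Case A is $\widehat{Z}\ge t$ and Case B is $\widehat{Z}<t$. These will yield, respectively, the integral term and the indicator term of $F^{\lambda}(t,R_{t},\theta^{\lambda}_{t})$.

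In Case A, the restriction $(V(B^{\lambda,br})_{s})_{0\le s\le t}$ is the initial segment of the bridge $Q^{\widehat{Z}}_{0,\lambda}$. Disintegrating this bridge with respect to the value $V(B^{\lambda,br})_{t}=y$ produces a Bessel $3$ bridge $Q^{t}_{0,y}$ on $[0,t]$ weighted by $\tilde{q}_{t}(0,y)\tilde{q}_{\widehat{Z}-t}(y,\lambda)y^{2}/\tilde{q}_{\widehat{Z}}(0,\lambda)\,dy$. Since the law of $(R_{s})_{0\le s\le t}$ given $R_{t}=y$ is exactly $Q^{t}_{0,y}$ and the marginal of $R_{t}$ is $\tilde{q}_{t}(0,y)y^{2}\,dy$, the density (given $\widehat{Z}=z\ge t$) with respect to the Bessel $3$ law is $\tilde{q}_{z-t}(R_{t},\lambda)/\tilde{q}_{z}(0,\lambda)$. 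Integrating against $f_{\widehat{Z}}(z)\,dz$ over $z\in[t,1]$ and simplifying the exponents (the combination $\lambda^{2}/(2z)-\lambda^{2}(1-z)/(2z)=\lambda^{2}/2$ produces the factor $\exp(\lambda^{2}/2)$) yields the first line of $D^{\lambda}_{t}$.

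In Case B, I further condition on $V(B^{\lambda,br})_{t}=\lambda+u$ with $u>0$ and on $\widehat{Z}=z<t$. The two segments are then $Q^{z}_{0,\lambda}$ on $[0,z]$ and $\lambda+Q^{t-z}_{0,u}$ on $[z,t]$, independently joined; the joint density of $(z,u)$ is $f_{\widehat{Z}}(z)\,\tilde{q}_{t-z}(0,u)\tilde{q}_{1-t}(u,0)u^{2}/\tilde{q}_{1-z}(0,0)$. By Lemma~\ref{Lem1}, on $\{R_{t}>\lambda\}$ the Bessel $3$ process $(R_{s})_{0\le s\le t}$ admits exactly the same structural factorisation under the identifications $\theta^{\lambda}_{t}\leftrightarrow\widehat{Z}$ and $R_{t}-\lambda\leftrightarrow u$. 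The conditional laws of the two path segments therefore coincide and the Case B density reduces to the pointwise ratio of the joint density of $(\widehat{Z},V(B^{\lambda,br})_{t})$ to the joint density of $(\theta^{\lambda}_{t},R_{t})$ supplied by Lemma~\ref{Lem1}. Carrying out the cancellations, the first-passage factors $f_{fp,R_{t}-\lambda}(t-\theta^{\lambda}_{t})$ and $f_{fp,\lambda}(\theta^{\lambda}_{t})$ drop out and one is left with
\begin{displaymath}
\frac{(1-\theta^{\lambda}_{t})(R_{t}-\lambda)}{R_{t}\sqrt{(1-t)^{3}}}\exp\left(\frac{\lambda^{2}}{2}-\frac{(R_{t}-\lambda)^{2}}{2(1-t)}\right),
\end{displaymath}
which is the second line of $D^{\lambda}_{t}$.

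The main obstacle is essentially bookkeeping: the several Gaussian densities ($f_{\widehat{Z}}$, the semi-groups $\tilde{q}$, and the density from Lemma~\ref{Lem1}) must be combined so that their exponents consolidate into $\exp(\lambda^{2}/2-(R_{t}-\lambda)^{2}/(2(1-t)))$ and their prefactors into $(1-\theta^{\lambda}_{t})(R_{t}-\lambda)/(R_{t}\sqrt{(1-t)^{3}})$. Some care is needed when evaluating the limits $\tilde{q}_{1-t}(u,0)$ and $\tilde{q}_{1-z}(0,0)$ that appear in the excursion marginal. Once this algebra is done, summing the Case A density (valid for all $R_{t}$) and the indicator-weighted Case B density (valid on $\{R_{t}>\lambda\}$) gives exactly $F^{\lambda}(t,R_{t},\theta^{\lambda}_{t})$, which is the desired density.
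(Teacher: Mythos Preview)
Your proof is correct and follows essentially the same route as the paper: you split according to $\{\widehat{Z}\ge t\}$ versus $\{\widehat{Z}<t\}$, in the first case obtain the density $\int_{t}^{1}\tilde{q}_{z-t}(R_{t},\lambda)/\tilde{q}_{z}(0,\lambda)\,f_{\widehat{Z}}(z)\,dz$ by matching the Bessel~$3$ bridge conditional laws, and in the second case invoke Lemma~\ref{Lem1} to identify the conditional path laws and reduce to the ratio of the joint densities of $(\widehat{Z},V(B^{\lambda,br})_{t})$ and $(\theta^{\lambda}_{t},R_{t})$. This is exactly the decomposition $D^{\lambda}_{t}=D^{1,\lambda}_{t}+D^{2,\lambda}_{t}$ carried out in the paper, with the same algebraic simplifications.
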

\begin{proof}
Observe that as as stochastic process, $(D^{\lambda}_{t})_{0\leq t<1}$ is continuous and in particular there is no discontinuity as $R_{t}$ crosses the level $\lambda$.

Let $t\in (0,1)$. We will decompose the density $D^{\lambda}_{t}$ as sum of two parts: 
$D^{\lambda}_{t}=D^{1,\lambda}_{t}+D^{2,\lambda}_{t}$, $D^{1,\lambda}_{t}$ accounting for the situation $\widehat{Z}>t$ and $D^{1,\lambda}_{t}$ for the situation $\widehat{Z}<t$. On the event $R_{t}<\lambda$, we will have $D^{\lambda}_{t}=D^{1,\lambda}_{t}$.

Conditionally on $\widehat{Z}>t$ and on the position of $V(B^{\lambda,br})_{t}$, the paths $(V(B^{\lambda,br})_{s})_{0\leq s\leq t}$ is a Bessel $3$ bridge from $0$ to $V(B^{\lambda,br})_{t}$, i.e. these are the same conditional laws as the laws of $(R_{s})_{0\leq s\leq t}$ conditioned on the value of $R_{t}$. Conditionally on $\widehat{Z}>t$ and on the value of $\widehat{Z}$, the distribution of $V(B^{\lambda,br})_{t}$ is:
\begin{displaymath}
\dfrac{\tilde{q}_{t}(0,y)\tilde{q}_{\widehat{Z}-t}(y,\lambda)}{\tilde{q}_{\widehat{Z}}(0,\lambda)}\,1_{y>0}\,y^{2}dy
\end{displaymath}
Let
\begin{multline*}
D^{1,\lambda}_{t}:=\int_{t}^{1}\dfrac{\tilde{q}_{s-t}(R_{t},\lambda)}{\tilde{q}_{s}(0,\lambda)}f_{\widehat{Z}}(s)\,ds=\\\dfrac{\exp\left(\frac{\lambda^{2}}{2}\right)}{2R_{t}}\int_{t}^{1}\dfrac{ds}{\sqrt{2\pi(1-s)(s-t)}}\left(\exp\left(-\dfrac{(R_{T}-\lambda)^{2}}{2(s-T)}\right)-
\exp\left(-\dfrac{(R_{t}+\lambda)^{2}}{2(s-t)}\right)\right)
\end{multline*}
Then for any measurable bounded functional $\Phi$ on paths:
\begin{displaymath}
\mathbb{E}\left[D^{1,\lambda}_{t}\Phi((R_{s})_{0\leq s\leq t})\right]=
\mathbb{E}\left[\Phi((V(B^{\lambda,br})_{s})_{0\leq s\leq t})1_{\widehat{Z}>T}\right]
\end{displaymath}

Next we consider the case $\widehat{Z}<t$. Conditionally on $\widehat{Z}<t$ and the position of $\widehat{Z}$ and $V(B^{\lambda,br})_{t}$, the paths $(V(B^{\lambda,br})_{s})_{0\leq s\leq \widehat{Z}}$ and 
$(V(B^{\lambda,br})_{\widehat{Z}+s}-\lambda)_{0\leq s\leq t-\widehat{Z}}$ are independent and follow the law $Q^{\widehat{Z}}_{0,\lambda}$ respectively $Q^{t-\widehat{Z}}_{0,V(B^{\lambda,br})_{t}-\lambda}$. These are the same conditional laws as in lemma \ref{Lem1}. On the event $\widehat{Z}<t$, the joint distribution of $(V(B^{\lambda,br})_{t},\widehat{Z})$ is:
\begin{displaymath}
f_{\widehat{Z}}(s)\dfrac{\tilde{q}_{t-s}(0,y-\lambda)\tilde{q}_{1-t}(y-\lambda,0)}{\tilde{q}_{1-s}(0,0)}\,1_{y>\lambda}\,(y-\lambda)^{2}dy\,1_{0<s<t}\,ds
\end{displaymath}
Let
\begin{equation*}
\begin{split}
D^{2,\lambda}_{t}=&1_{R_{t}>\lambda}
\dfrac{f_{\widehat{Z}}(\theta^{\lambda}_{t})\dfrac{\tilde{q}_{t-\theta^{\lambda}_{t}}(0,R_{t}-\lambda)\tilde{q}_{1-t}(R_{t}-\lambda,0)}{\tilde{q}_{1-\theta^{\lambda}_{t}}(0,0)}(R_{t}-\lambda)^{2}}
{\tilde{q}_{t}(0,R_{t})\dfrac{f_{fp,R_{t}-\lambda}(t-\theta^{\lambda}_{t})f_{fp,\lambda}(\theta^{\lambda}_{t})}{f_{fp,R_{t}}(t)}\,R_{t}^{2}}\\
=&1_{R_{t}>\lambda}\dfrac{(1-\theta^{\lambda}_{t})(R_{t}-\lambda)}{\sqrt{(1-t)^{3}}R_{T}}\exp\left(\dfrac{\lambda^{2}}{2}\right)
\exp\left(-\dfrac{(R_{t}-\lambda)^{2}}{2(1-T)}\right)
\end{split}
\end{equation*}
By construction, for any $\Phi$ measurable bounded function on $\mathbb{R}^{2}$:
\begin{displaymath}
\mathbb{E}\left[D^{2,\lambda}_{t}\Phi(R_{t},\theta^{\lambda}_{t})\right]=
\mathbb{E}\left[\Phi(V(B^{\lambda,br})_{t},\widehat{Z})1_{\widehat{Z}<t}\right]
\end{displaymath}
Because of the equality of conditional laws, for any measurable bounded functional $\Phi$ on paths:
\begin{displaymath}
\mathbb{E}\left[D^{2,\lambda}_{t}\Phi((R_{s})_{0\leq s\leq t})\right]=
\mathbb{E}\left[\Phi((V(B^{\lambda,br})_{s})_{0\leq s\leq t})1_{\widehat{Z}<t}\right]
\end{displaymath}
\end{proof}

\begin{lem}
\label{LemIntegrale}
For any $t\in(0,1)$ and $a\geq 0$:
\begin{displaymath}
\int_{t}^{1}\dfrac{ds}{\sqrt{(1-s)(s-t)}}\exp\left(-\dfrac{a}{s-t}\right)=\sqrt{\pi}
\int^{+\infty}_{\frac{a}{1-t}}e^{-u}\dfrac{du}{\sqrt{u}}
\end{displaymath}
\end{lem}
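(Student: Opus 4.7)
The plan is to reduce the left hand side to the right hand side by two explicit substitutions followed by a classical identity for a Gaussian--Lorentzian integral. Set $L := 1-t$ and $\alpha := a/L$. First I would use the substitution $s = t + L\sin^{2}\theta$, which is adapted to the measure $ds/\sqrt{(1-s)(s-t)}$: one checks that $s-t = L\sin^{2}\theta$, $1-s = L\cos^{2}\theta$, $ds = 2L\sin\theta\cos\theta\,d\theta$, and hence $ds/\sqrt{(1-s)(s-t)} = 2\,d\theta$. This turns the LHS into
$$I = 2\int_{0}^{\pi/2}\exp\!\bigl(-\alpha/\sin^{2}\theta\bigr)\,d\theta.$$

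Next, I would write $1/\sin^{2}\theta = 1 + \cot^{2}\theta$ to peel off the factor $e^{-\alpha}$, and then substitute $v = \sqrt{\alpha}\,\cot\theta$ (assuming $a>0$; the case $a=0$ is immediate, since both sides then equal $\pi$). Using $\sin^{2}\theta = \alpha/(\alpha+v^{2})$ to handle the Jacobian $d\theta = -\sqrt{\alpha}/(\alpha+v^{2})\,dv$ gives
$$I = 2\sqrt{\alpha}\,e^{-\alpha}\int_{0}^{\infty}\frac{e^{-v^{2}}}{\alpha+v^{2}}\,dv.$$

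The remaining Gaussian--Lorentzian integral is evaluated by the standard trick of writing $1/(\alpha+v^{2}) = \int_{0}^{\infty}e^{-r(\alpha+v^{2})}\,dr$, applying Fubini (which is legitimate because the integrand is nonnegative), carrying out the resulting Gaussian integral in $v$, and finally substituting $u = \alpha(1+r)$. This yields
$$\int_{0}^{\infty}\frac{e^{-v^{2}}}{\alpha+v^{2}}\,dv = \frac{\sqrt{\pi}\,e^{\alpha}}{2\sqrt{\alpha}}\int_{\alpha}^{\infty}\frac{e^{-u}}{\sqrt{u}}\,du.$$
Plugging this back into the expression for $I$, the factors $\sqrt{\alpha}\,e^{-\alpha}$ cancel and, reinstating $\alpha = a/(1-t)$, I recover the right hand side. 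The argument is entirely elementary; I do not foresee a real obstacle beyond isolating the boundary case $a=0$ and carefully tracking the sign and Jacobian of the $\cot$ substitution.
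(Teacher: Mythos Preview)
Your argument is correct. Both the trigonometric substitution and the subsequent $\cot$ substitution are carried out accurately, and the Gaussian--Lorentzian integral is evaluated correctly via the Laplace representation $1/(\alpha+v^{2})=\int_{0}^{\infty}e^{-r(\alpha+v^{2})}\,dr$ together with Fubini.

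The paper takes a different, somewhat shorter route. It first makes the affine change $z=(s-t)/(1-t)$ to reduce to the one-parameter function $\varphi(x)=\int_{0}^{1}\frac{dz}{\sqrt{z(1-z)}}\,e^{-x/z}$ with $x=a/(1-t)$, then substitutes $v=z^{-1}$ and \emph{differentiates} under the integral sign, obtaining $\varphi'(x)=-\sqrt{\pi}\,e^{-x}/\sqrt{x}$ directly; the claim follows by integrating from $x$ to $+\infty$ using the boundary condition $\varphi(+\infty)=0$. So the paper trades your Fubini--Laplace step for a differentiation-in-parameter step. Your approach has the minor virtue of avoiding the interchange of derivative and integral (you use Tonelli on a nonnegative integrand instead), while the paper's approach avoids the detour through the $\cot$ substitution and the Gaussian--Lorentzian identity. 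Both are entirely elementary and of comparable length.
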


\begin{proof}
With the change of variables $z:=\dfrac{1-s}{1-t}$ we get
\begin{displaymath}
\int_{t}^{1}\dfrac{ds}{\sqrt{(1-s)(s-t)}}\exp\left(-\dfrac{a}{s-t}\right)=
\int_{0}^{1}\dfrac{dz}{\sqrt{z(1-z)}}\exp\left(-\dfrac{a}{(1-t)z}\right)
\end{displaymath}
For $x\geq 0$ let:
\begin{displaymath}
\varphi(x):=\int_{0}^{1}\dfrac{dz}{\sqrt{z(1-z)}}\exp\left(-\dfrac{x}{z}\right)
\end{displaymath}
With the change of variables $v=z^{-1}$ we get:
\begin{displaymath}
\varphi(x)=\int_{1}^{+\infty}\dfrac{dv}{v\sqrt{v-1}}e^{-xv}
\end{displaymath}
Differentiating with respect to $x$ we get:
\begin{equation*}
\begin{split}
\varphi'(x)=&-\int_{1}^{+\infty}\dfrac{dv}{\sqrt{v-1}}e^{-xv}=
-e^{-x}\int_{0}^{+\infty}\dfrac{dv}{\sqrt{v}}e^{-xv}\\=&
-\dfrac{e^{-x}}{\sqrt{x}}\int_{0}^{+\infty}\dfrac{dv}{\sqrt{v}}e^{-v}=
-\sqrt{\pi}\dfrac{e^{-x}}{\sqrt{x}}
\end{split}
\end{equation*}
Moreover $\varphi$ satisfies the border condition $\varphi(+\infty)=0$. Thus
\begin{displaymath}
\varphi(x)=\sqrt{\pi}\int_{x}^{+\infty}e^{-u}\dfrac{du}{\sqrt{u}}
\end{displaymath}
\end{proof}

Let
\begin{displaymath}
F^{1,\lambda}(t,y):=\dfrac{1}{2\sqrt{2}y}
\exp\left(\frac{\lambda^{2}}{2}\right)\int_{\frac{(y-\lambda)^{2}}{2(1-t)}}^{\frac{(y+\lambda)^{2}}{2(1-t)}} 
e^{-u}\dfrac{du}{\sqrt{u}}
\end{displaymath}
\begin{displaymath}
F^{2,\lambda}(t,y):=\dfrac{(y-\lambda)}{\sqrt{(1-t)^{3}}y}\exp\left(\dfrac{\lambda^{2}}{2}\right)
\exp\left(-\dfrac{(y-\lambda)^{2}}{2(1-t)}\right)
\end{displaymath}
According to lemma \ref{LemIntegrale}:
\begin{displaymath}
F^{\lambda}(t,y,\theta)=F^{1,\lambda}(t,y)+(1-\theta)(0\vee F^{2,\lambda}(t,y))
\end{displaymath}
$F^{2,\lambda}$ is $\mathcal{C}^{1}$. $F^{1,\lambda}$ and the partial derivative $\partial_{1}F^{1,\lambda}$ are continuous as functions of two variables. Yet $\partial_{2}F^{1,\lambda}(t,y)$ is not defined at $y=\lambda$:
\begin{displaymath}
\partial_{2}F^{1,\lambda}(t, \lambda^{+})-\partial_{2}F^{1,\lambda}(t, \lambda^{-})=
-\dfrac{1}{\sqrt{1-t}\lambda}\exp\left(\dfrac{\lambda^{2}}{2}\right)
\end{displaymath}
\begin{equation*}
\begin{split}
\partial_{2}F^{\lambda}(t, \lambda^{+},\theta)-\partial_{2}F^{\lambda}(t,&\lambda^{-},\theta)
\\=&\partial_{2}F^{1,\lambda}(t, \lambda^{+})-\partial_{2}F^{1,\lambda}(t, \lambda^{-})
+(1-\theta)\partial_{2}F^{2,\lambda}(t,\lambda)\\
=&\dfrac{(t-\theta)}{\sqrt{(1-t)^{3}}\lambda}\exp\left(\dfrac{\lambda^{2}}{2}\right)
\end{split}
\end{equation*}

For $t>0$ let: 
\begin{displaymath}
W_{t}:=R_{t}-\int_{0}^{t}\dfrac{ds}{R_{s}}
\end{displaymath}
$(W_{t})_{t\geq 0}$ is a standard Brownian motion starting from $0$, predictable with respect the filtration of $(R_{t})_{t\geq 0}$.

\begin{lem}
\label{LemMartDec}
For all $t\in[0,1)$:
\begin{displaymath}
D^{\lambda}_{t}=1+\int_{0}^{t}\partial_{2}F^{\lambda}(s,R_{s},\theta^{\lambda}_{s})\,dW_{s}
\end{displaymath}
\end{lem}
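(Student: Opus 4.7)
The plan is to apply a generalized Itô formula to $D^{\lambda}_{t}=F^{\lambda}(t,R_{t},\theta^{\lambda}_{t})$. The preceding proposition already identifies $D^{\lambda}_{t}$ as the Radon--Nikodym density of the law of $(V(B^{\lambda,br})_{s})_{0\le s\le t}$ relative to the Bessel 3 law on $\mathcal{F}_{t}$, so $D^{\lambda}_{t}$ is automatically a continuous nonnegative martingale with $D^{\lambda}_{0}=1$. The task is thus to expand $D^{\lambda}_{t}$ via Itô--Tanaka and verify that the only surviving piece is the Itô term with integrand $\partial_{2}F^{\lambda}(t,R_{t},\theta^{\lambda}_{t})$; all drift, all $d\theta^{\lambda}$ contributions, and all local time terms must cancel.

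The first ingredient is a harmonicity check. I would verify (by direct computation, or via Kolmogorov's backward equation applied to $\tilde{q}$) that $F^{1,\lambda}$ satisfies $\partial_{t}f+y^{-1}\partial_{y}f+\tfrac{1}{2}\partial_{y}^{2}f=0$ on each of $\{y<\lambda\}$ and $\{y>\lambda\}$, and that $F^{2,\lambda}$ satisfies the same equation on $\{y>\lambda\}$. For $F^{1,\lambda}$ this follows from the backward Kolmogorov equation applied to $(t,y)\mapsto\tilde{q}_{s-t}(y,\lambda)$ at fixed $s,\lambda$, together with differentiation under the integral; for $F^{2,\lambda}$ it is a short direct computation from the closed form. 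The second ingredient is the behaviour of $\theta^{\lambda}_{t}$: it is continuous, nondecreasing, and absolutely continuous with $d\theta^{\lambda}_{t}=1_{R_{t}\le\lambda}\,dt$. Since $\partial_{3}F^{\lambda}(t,y,\theta)=-(0\vee F^{2,\lambda}(t,y))$ vanishes on $\{y\le\lambda\}$, the product-rule contribution $\partial_{3}F^{\lambda}\,d\theta^{\lambda}_{t}$ is identically zero.

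Itô--Tanaka applied to $F^{\lambda}(t,R_{t},\theta^{\lambda}_{t})$, using $dR_{t}=dW_{t}+dt/R_{t}$, then produces the Itô term $\partial_{2}F^{\lambda}(t,R_{t},\theta^{\lambda}_{t})\,dW_{t}$, a drift $(\partial_{1}F^{\lambda}+R_{t}^{-1}\partial_{2}F^{\lambda}+\tfrac{1}{2}\partial_{2}^{2}F^{\lambda})\,dt$ which vanishes off $\{R_{t}=\lambda\}$ by harmonicity (treating $\theta$ as a parameter on $\{R_{t}>\lambda\}$, and noting that on $\{R_{t}<\lambda\}$ only the $F^{1,\lambda}$ piece is active), and a local time contribution
\[
\tfrac{1}{2}\bigl[\partial_{2}F^{\lambda}(t,\lambda^{+},\theta^{\lambda}_{t})-\partial_{2}F^{\lambda}(t,\lambda^{-},\theta^{\lambda}_{t})\bigr]\,dL^{\lambda}_{t}(R).
\]
The jump was already computed in the preamble to be $(t-\theta^{\lambda}_{t})\lambda^{-1}(1-t)^{-3/2}\exp(\tfrac{\lambda^{2}}{2})$. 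On the support of $dL^{\lambda}_{t}(R)$ one has $R_{t}=\lambda$, hence $\theta^{\lambda}_{t}=t$, hence $t-\theta^{\lambda}_{t}=0$: the local time term vanishes and only the Itô term survives.

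The argument is closed by the initial condition $D^{\lambda}_{0}=F^{\lambda}(0,0,0)=1$. At $y=0$ the factor $(y-\lambda)/y$ diverges negatively and so $0\vee F^{2,\lambda}(0,0)=0$, leaving $D^{\lambda}_{0}=F^{1,\lambda}(0,0)$, which evaluates to $1$ by a single L'H\^opital step on the integral defining $F^{1,\lambda}(0,y)$ as $y\to 0^{+}$. The principal subtlety is the non-$\mathcal{C}^{2}$ behaviour of $F^{\lambda}$ at $y=\lambda$: classical Itô is inapplicable, and the clean cancellation of the local time rests entirely on the specific factor $(t-\theta^{\lambda}_{t})$ appearing in the jump of $\partial_{2}F^{\lambda}$, which is precisely what vanishes on the support of $L^{\lambda}$.
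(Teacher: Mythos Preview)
Your overall strategy is sound, and your local-time cancellation argument is correct, but there is a genuine gap: your claim that $\theta^{\lambda}_{t}$ is continuous with $d\theta^{\lambda}_{t}=1_{R_{t}\le\lambda}\,dt$ is false. If $(a,b)$ is an excursion interval of $R$ above $\lambda$ (so $R_{a}=R_{b}=\lambda$ and $R_{s}>\lambda$ on $(a,b)$), then $\theta^{\lambda}_{t}=a$ for $t\in[a,b)$ while $\theta^{\lambda}_{b}=b$: the process jumps by $b-a$ at time $b$. The paper opens its proof by flagging exactly this point: one cannot simply apply It\^o's formula to $F^{\lambda}(t,R_{t},\theta^{\lambda}_{t})$ because $\theta^{\lambda}$ has jumps. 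Your formula $d\theta^{\lambda}_{t}=1_{R_{t}\le\lambda}\,dt$ captures only the absolutely continuous part and omits the pure-jump part, which carries the entire time spent above $\lambda$.

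Your conclusion that the $\theta$-variation contributes nothing is nevertheless correct, but it needs a different justification. The repair along your lines would be: the jumps of $\theta^{\lambda}$ occur only at times where $R_{t}=\lambda$, and at $y=\lambda$ the function $F^{\lambda}(t,\lambda,\theta)=F^{1,\lambda}(t,\lambda)$ is $\theta$-independent, so the jumps of $\theta^{\lambda}$ produce no jump in $D^{\lambda}$; the continuous part of $d\theta^{\lambda}$ is supported on $\{R_{t}\le\lambda\}$, where $\partial_{3}F^{\lambda}=-(0\vee F^{2,\lambda})=0$. The paper takes a different route that avoids treating $\theta$ as an It\^o variable at all: it expands $F^{1,\lambda}(t,R_{t})$ and $0\vee F^{2,\lambda}(t,R_{t})$ separately by It\^o--Tanaka (each being a function of $(t,R_{t})$ only), and then handles the product $(1-\theta^{\lambda}_{t})(0\vee F^{2,\lambda}(t,R_{t}))$ via a balayage-type result (Revuz--Yor, Theorem~VI.4.2), using that $1-\theta^{\lambda}_{t}$ is constant on the excursion intervals where $0\vee F^{2,\lambda}(t,R_{t})>0$. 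In that decomposition the local-time terms are individually nonzero, namely $\mp\lambda^{-1}e^{\lambda^{2}/2}\int_{0}^{t}(1-s)^{-1/2}\,dL^{\lambda}_{s}(R)$, and cancel only after one uses $\theta^{\lambda}_{s}=s$ on the support of $dL^{\lambda}_{s}(R)$ to convert $(1-\theta^{\lambda}_{s})(1-s)^{-3/2}$ into $(1-s)^{-1/2}$. Your single local-time term with coefficient $(t-\theta^{\lambda}_{t})$ is the net of these two, which is why it vanishes directly.
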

\begin{proof}
One can not just apply Ito's formula to $F^{\lambda}(t,R_{t},\theta^{\lambda}_{t})$ because 
$(\theta^{\lambda}_{t})_{t\geq 0}$ is a process that has jumps.

One can check that $F^{2,\lambda}$ and $F^{1,\lambda}$ outside $\lbrace y=\lambda\rbrace$ satisfy the PDE:
\begin{displaymath}
\dfrac{1}{2}\partial_{2,2}F(t,y)+\dfrac{1}{y}\partial_{2}F(t,y)+\partial_{1}F(t,y)=0
\end{displaymath}
Let $(L^{\lambda}_{t}(R))_{t\geq 0}$ be the local time at level $\lambda$ of $(R_{t})_{t\geq 0}$. Applying Ito-Tanaka's formula, and taking in account the discontinuity of partial derivatives $\partial_{2}$ at level $y=\lambda$, we get:
\begin{displaymath}
F^{1,\lambda}(t,R_{t})=1+\int_{0}^{t}\partial_{2}F^{1,\lambda}(s,R_{s})\,dW_{s}-
\dfrac{1}{\lambda}\exp\left(\dfrac{\lambda^{2}}{2}\right)\int_{0}^{t}\dfrac{1}{\sqrt{1-s}}
\,dL^{\lambda}_{s}(R)
\end{displaymath}
\begin{displaymath}
0\vee F^{2,\lambda}(t,R_{t})=\int_{0}^{t}1_{R_{s}>\lambda}
\partial_{2}F^{2,\lambda}(s,R_{s})\,dW_{s}+
\dfrac{1}{\lambda}\exp\left(\dfrac{\lambda^{2}}{2}\right)\int_{0}^{t}\dfrac{1}{\sqrt{(1-s)^{3}}}
\,dL^{\lambda}_{s}(R)
\end{displaymath}
$(1-\theta^{\lambda}_{t})$ is constant on the intervals of time where $0\vee F^{2,\lambda}(t,R_{t})$ is positive. From theorem $4.2$, section $VI.4$, \cite{RevuzYor1999BMGrundlehren}, follows that:
\begin{equation*}
\begin{split}
(1-\theta^{\lambda}_{t}&)(0\vee F^{2,\lambda}(t,R_{t}))\\
=&\int_{0}^{t}1_{R_{s}>\lambda}
(1-\theta^{\lambda}_{s})\partial_{2}F^{2,\lambda}(s,R_{s})\,dW_{s}+
\dfrac{1}{\lambda}\exp\left(\dfrac{\lambda^{2}}{2}\right)\int_{0}^{t}\dfrac{(1-\theta^{\lambda}_{s})}
{\sqrt{(1-s)^{3}}}\,dL^{\lambda}_{s}(R)\\
=&\int_{0}^{t}1_{R_{s}>\lambda}
(1-\theta^{\lambda}_{s})\partial_{2}F^{2,\lambda}(s,R_{s})\,dW_{s}+
\dfrac{1}{\lambda}\exp\left(\dfrac{\lambda^{2}}{2}\right)\int_{0}^{t}\dfrac{1}
{\sqrt{1-s}}\,dL^{\lambda}_{s}(R)
\end{split}
\end{equation*}
on the support of $dL^{\lambda}_{s}(R)$, $(1-\theta^{\lambda}_{s})$ being equal to $s$. Finally
\begin{multline*}
F^{1,\lambda}(t,R_{t})+(1-\theta^{\lambda}_{t})(0\vee F^{2,\lambda}(t,R_{t}))\\=
1+\int_{0}^{t}(\partial_{2}F^{1,\lambda}(s,R_{s})+(1-\theta^{\lambda}_{s})\partial_{2}
F^{2,\lambda}(s,R_{s}))\,dW_{s}
\end{multline*}
which finishes the proof.
\end{proof}

\begin{prop}
For $t\in(0,1)$, let:
\begin{displaymath}
\tilde{\theta}_{t}^{\lambda,br}:=\sup\lbrace s\in[0,t]\vert V(B^{\lambda, br})_{s}\leq \lambda\rbrace
\end{displaymath}
Then the process
\begin{displaymath}
\left(V(B^{\lambda, br})_{t}-\int_{0}^{t}\dfrac{ds}{V(B^{\lambda, br})_{s}}-\int_{0}^{t}\dfrac{\partial_{2}F^{\lambda}}{F^{\lambda}}
(s,V(B^{\lambda, br})_{s},\tilde{\theta}_{s}^{\lambda,br})\,ds\right)_{0\leq t\leq 1}
\end{displaymath}
is a standard Brownian motion.
\end{prop}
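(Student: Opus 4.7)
The plan is to deduce the semi-martingale decomposition by applying Girsanov's theorem with martingale density $D^{\lambda}_t$, using the stochastic integral representation obtained in Lemma \ref{LemMartDec}.

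First, recall that by the very definition of $W$, the Bessel $3$ process $R$ satisfies $R_t = W_t + \int_0^t \frac{ds}{R_s}$. By the previous proposition, for each $t \in [0,1)$ the law of $(V(B^{\lambda,br})_s)_{0 \leq s \leq t}$ is the law of $(R_s)_{0 \leq s \leq t}$ weighted by the density $D^{\lambda}_t$; in particular $D^{\lambda}$ is a strictly positive martingale of mean $1$ on $[0,1)$, with representation
\[
D^{\lambda}_t = 1 + \int_0^t \partial_2 F^{\lambda}(s, R_s, \theta^{\lambda}_s)\,dW_s
\]
from Lemma \ref{LemMartDec}.

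Next, fix $t_0 \in (0,1)$ and apply Girsanov's theorem on $[0,t_0]$: under the tilted probability with density $D^{\lambda}_{t_0}$, the process
\[
\tilde W_s := W_s - \int_0^s \frac{\partial_2 F^{\lambda}}{F^{\lambda}}(u, R_u, \theta^{\lambda}_u)\,du
\]
is a standard Brownian motion on $[0,t_0]$. Substituting into the decomposition of $R$ yields
\[
R_s - \int_0^s \frac{du}{R_u} - \int_0^s \frac{\partial_2 F^{\lambda}}{F^{\lambda}}(u, R_u, \theta^{\lambda}_u)\,du = \tilde W_s
\]
under this tilted probability. Since $(R_s, \theta^{\lambda}_s)_{s\leq t_0}$ under the tilted probability has the same joint law as $(V(B^{\lambda,br})_s, \tilde\theta^{\lambda,br}_s)_{s\leq t_0}$ under $\mathbb{P}$, the process in the statement, restricted to $[0,t_0]$, is a standard Brownian motion under $\mathbb{P}$.

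Finally, letting $t_0 \uparrow 1$ and using the a.s. continuity of $V(B^{\lambda,br})$ extends the conclusion to $[0,1)$, and then to $[0,1]$ by continuity. The main obstacle is precisely this last step: one cannot apply Girsanov globally on $[0,1]$, since the laws of $V(B^{\lambda,br})$ and $R$ are mutually singular at $t = 1$ (the former equals $\lambda$ a.s. while the latter has a diffuse distribution on $(0,+\infty)$). Localizing to $[0,t_0]$ with $t_0 < 1$ and extending by pathwise continuity is the standard workaround, and goes through once one checks that the two drift integrals in the statement converge as $t \uparrow 1$; this is guaranteed by the fact, already established by Pitman and Tang, that $V(B^{\lambda,br})$ is a semi-martingale on the closed interval $[0,1]$, so its finite variation part is well-defined up to time $1$.
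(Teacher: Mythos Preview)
Your proposal is correct and follows essentially the same route as the paper: Girsanov's theorem applied with the density martingale $D^{\lambda}$, using its stochastic-integral representation from Lemma~\ref{LemMartDec}, to turn the Bessel~$3$ decomposition $R_t=W_t+\int_0^t R_s^{-1}\,ds$ into the claimed decomposition of $V(B^{\lambda,br})$. The paper presents this a bit more tersely via the covariation $[\log D^{\lambda},W]$, but the content is identical; if anything, your explicit localization to $[0,t_0]$ and the discussion of the extension to $t=1$ make the argument cleaner than the paper's own proof, which does not address that endpoint.
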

\begin{proof}
For $t\in[0,1)$, let
\begin{displaymath}
X_{t}:=V(B^{\lambda, br})_{t}-\int_{0}^{t}\dfrac{ds}{V(B^{\lambda, br})_{s}}
\end{displaymath}
The law of $(X_{s})_{0\leq s\leq t}$ is absolutely continuous with respect to the law of $(W_{s})_{0\leq s\leq t}$, with density $D^{\lambda}_{t}$. From lemma \ref{LemMartDec} follows that
\begin{displaymath}
[\log(D^{\lambda}),W]_{t}=
\int_{0}^{t}\dfrac{\partial_{2}F^{\lambda}}{F^{\lambda}}(s,R_{s},\theta^{\lambda}_{s})\,ds
\end{displaymath}
From Girsanov's theorem follows that the process:
\begin{displaymath}
X_{t}-\int_{0}^{t}\dfrac{\partial_{2}F^{\lambda}}{F^{\lambda}}
(s,V(B^{\lambda, br})_{s},\tilde{\theta}_{s}^{\lambda,br})\,ds
\end{displaymath}
is a Brownian motion.
\end{proof}

\section{Semi-martingale decomposition of the Vervaat's transform of a Brownian path}

In this section we consider $(V(B)_{t})_{0\leq t\leq 1}$ the Vervaat's transform of a Brownian motion on $[0,1]$. By definition, $V(B)_{1}=B_{1}$. A.s. there is $\varepsilon>0$ such that for all $t\in(0,\varepsilon)$, $V(B)_{t}>0$. Let
\begin{displaymath}
\widetilde{T}_{0}:=\inf\lbrace t\in (0,1]\vert V(B)_{t}=0\rbrace
\end{displaymath}
Then $\mathbb{P}(\widetilde{T}_{0}\leq 1)=\dfrac{1}{2}$ and more precisely
$\lbrace \widetilde{T}_{0}\leq 1\rbrace = \lbrace V(B)_{1}\leq 0\rbrace$. Conditionally on $\widetilde{T}_{0}\leq 1$, $\widetilde{T}_{0}$ follows the arcsine law $1_{0< t<1}\dfrac{dt}{\pi\sqrt{t(1-t)}}$. Conditionally on $\widetilde{T}_{0}\leq 1$ and on the value of $\widetilde{T}_{0}$, $(V(B)_{t})_{0\leq t\leq \widetilde{T}_{0}}$
has the law $Q^{\widetilde{T}_{0}}_{0,0}$ and is independent from $(V(B)_{t})_{\widetilde{T}_{0}\leq t\leq 1}$.
The joint law of  $(V(B)_{1}, \widetilde{T}_{0})$ on the event $\widetilde{T}_{0}\leq 1$ is:
\begin{displaymath}
\dfrac{1_{\lambda <0}\,d \lambda}{\sqrt{2\pi}}\exp\left(-\dfrac{\lambda^{2}}{2}\right)
\dfrac{\vert\lambda\vert}{\sqrt{2\pi t(1-t)^{3}}}\exp\left(-\dfrac{\lambda^{2}t}{2(1-t)}\right)\,1_{0<t<1}dt
\end{displaymath}
Thus the law of $V(B)_{1}$ conditionally on $\widetilde{T}_{0}$ is:
\begin{equation}
\label{CondDensEndpoint}
\dfrac{\vert\lambda\vert}{1-\widetilde{T}_{0}}
\exp\left(-\dfrac{\lambda^{2}}{2(1-\widetilde{T}_{0})}\right)1_{\lambda <0}\,d \lambda
\end{equation}

For the semi-martingale decomposition of $(V(B)_{t})_{0\leq t\leq 1}$ we will split the task in two: the decomposition of $(V(B)_{t})_{0\leq t\leq \widetilde{T}_{0}}$ and the decomposition of 
$(V(B)_{t})_{\widetilde{T}_{0}\leq t\leq 1}$. We will start with the latter. Let $(\widetilde{M}_{t})_{t\geq 0}$ be the process:
\begin{displaymath}
\widetilde{M}_{t}:=\min_{[0,t]}V(B)
\end{displaymath}

\begin{lem}
\label{LemEqQueue}
Conditionally on the value of $\widetilde{T}_{0}$,
\begin{displaymath}
\left(V(B)_{t}+\int_{\widetilde{T}_{0}}^{t}\dfrac{V(B)_{s}-\widetilde{M}_{s}}{1-s}\,ds\right)
_{\widetilde{T}_{0}\leq t\leq 1}
\end{displaymath}
is a Brownian motion
\end{lem}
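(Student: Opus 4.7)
\emph{Plan.} The strategy is to enlarge the conditioning by also conditioning on the endpoint $V(B)_1 = \lambda$, identify the semimartingale decomposition under this richer conditioning, and then project back onto the filtration of $V(B)$ via the innovation theorem. Since $V(B)$ conditional on $B_1 = \lambda < 0$ has the law of $V(B^{\lambda,br})$, the path decomposition of Section~2 applies: on $\{\widetilde{T}_0 = t_0,\,V(B)_1 = \lambda\}$, the process $(V(B)_t)_{t_0 \le t \le 1}$ is a Brownian first passage bridge from $0$ to $\lambda$ of length $1-t_0$. As the $h$-transform of Brownian motion by $h(t,y) = f_{fp,\,y-\lambda}(1-t)$, its logarithmic derivative $\partial_y \log h = 1/(y-\lambda) - (y-\lambda)/(1-t)$ gives the conditional SDE
\begin{displaymath}
 dV(B)_t = \left(\frac{1}{V(B)_t - \lambda} - \frac{V(B)_t - \lambda}{1-t}\right) dt + dW_t^{(\lambda)}.
\end{displaymath}

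I then compute the conditional law of $\lambda$ given $\mathcal{F}_t^{V(B)}$ and $\widetilde{T}_0$ by Bayes' formula, using the prior \eqref{CondDensEndpoint} and the likelihood $f_{fp,V(B)_t-\lambda}(1-t)/f_{fp,-\lambda}(1-t_0)\cdot 1_{\widetilde{M}_t>\lambda}$ of the first-passage-bridge restricted to $[t_0,t]$ with respect to Brownian motion. Setting $z := V(B)_t - \lambda$, the factors involving $|\lambda|/f_{fp,-\lambda}(1-t_0)$ combine cleanly with the Gaussian exponential of the Rayleigh prior, and the posterior simplifies to the truncated Rayleigh
\begin{displaymath}
 \frac{z}{1-t}\exp\!\left(-\frac{z^2 - (V(B)_t - \widetilde{M}_t)^2}{2(1-t)}\right) 1_{z > V(B)_t - \widetilde{M}_t}\,dz.
\end{displaymath}
The constraint $\lambda < \widetilde{M}_t$ becomes a simple lower cutoff, singling out $V(B)_t - \widetilde{M}_t$ as the only sufficient statistic for $\lambda$.

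The innovation theorem (verified directly from $\int E_{P_\lambda}[\Phi(V(B)_t-V(B)_u-\int b\,ds)]\,\mu(d\lambda)=0$ for each $\mathcal{F}_u^{V(B)}$-measurable $\Phi$) then says that under the law conditioned only on $\widetilde{T}_0$, the drift of $V(B)$ in its own filtration is the $\mathcal{F}_t^{V(B)}$-conditional expectation of the conditional drift, and the quadratic variation remains $dt$, so by L\'evy's criterion the martingale part is a Brownian motion. Writing $\alpha := V(B)_t - \widetilde{M}_t$ and $\tau := 1-t$, a direct integration and one integration by parts against the truncated Rayleigh yield
\begin{displaymath}
 E\!\left[\frac{1}{z}\right] = \frac{1}{\tau}\,\frac{\int_\alpha^\infty e^{-z^2/(2\tau)}\,dz}{e^{-\alpha^2/(2\tau)}}, \qquad E[z] = \alpha + \frac{\int_\alpha^\infty e^{-z^2/(2\tau)}\,dz}{e^{-\alpha^2/(2\tau)}},
\end{displaymath}
and the two incomplete-Gaussian tails cancel upon substitution into the conditional drift $E[1/z]-E[z]/\tau$, leaving exactly $-\alpha/\tau = -(V(B)_t-\widetilde{M}_t)/(1-t)$. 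The main obstacle is precisely this cancellation: it depends on a tight matching between the BES($3$)-type factor $1/(V(B)_t-\lambda)$ in the conditional drift and the Rayleigh shape of the prior \eqref{CondDensEndpoint}, and it is what makes $\widetilde{M}_t$ emerge as the correct additional state variable.
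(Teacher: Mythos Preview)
Your argument is correct and rests on the same ingredients as the paper's proof, but the two are organised differently. The paper proceeds by first integrating out $\lambda$: it writes the Radon--Nikodym density of $(V(B)_s)_{\widetilde{T}_0\le s\le t}$ with respect to a Brownian motion $B'$ as the mixture
\[
\int_{-\infty}^{M'_{t-\widetilde{T}_0}}\frac{f_{fp,\,B'_{t-\widetilde{T}_0}-\lambda}(1-t)}{f_{fp,\,|\lambda|}(1-\widetilde{T}_0)}\,
\frac{|\lambda|}{1-\widetilde{T}_0}\exp\!\left(-\frac{\lambda^{2}}{2(1-\widetilde{T}_0)}\right)d\lambda,
\]
observes the same cancellation you found (the Rayleigh prior against the $f_{fp,|\lambda|}$ denominator), obtains the closed form $\sqrt{(1-\widetilde{T}_0)/(1-t)}\exp\bigl(-(B'_{t-\widetilde{T}_0}-M'_{t-\widetilde{T}_0})^{2}/(2(1-t))\bigr)$, and applies Girsanov once. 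You instead differentiate first and integrate second: you write the conditional drift given $\lambda$, compute the posterior of $\lambda$, and average via the innovation theorem. The two are equivalent through the identity $\int \partial_y(\mathrm{lik}_\lambda)\,\pi(d\lambda)\big/\int \mathrm{lik}_\lambda\,\pi(d\lambda)=\partial_y\log\int \mathrm{lik}_\lambda\,\pi(d\lambda)$, and indeed your normalising constant for the posterior is exactly the paper's density. What each buys: the paper's route needs only Girsanov and no appeal to filtering theory, and avoids checking the integrability hypotheses of the innovation theorem (the drift $1/(V(B)_t-\lambda)$ is singular, so one should localise on $[\,\widetilde{T}_0,1-\varepsilon\,]$ before invoking it); your route makes transparent why $V(B)_t-\widetilde{M}_t$ is the sufficient statistic and why the incomplete-Gaussian tails cancel.
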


\begin{proof}
The value of $\widetilde{T}_{0}$ is considered as fixed. Let $(B'_{t})_{t\geq 0}$ be a Brownian motion starting from $0$ and
\begin{displaymath}
M'_{t}:=\min_{[0,t]}B'
\end{displaymath}
For any $t\in [\widetilde{T}_{0},1)$, the law of $(V(B)_{s})_{\widetilde{T}_{0}\leq t\leq t}$ is absolutely continuous with respect to the law of $(B'_{s})_{0\leq s\leq t-\widetilde{T}_{0}}$. The corresponding density is:
\begin{equation}
\label{DensityReste}
\int_{-\infty}^{M'_{t-\widetilde{T}_{0}}}\dfrac{f_{fp,B'_{t-\widetilde{T}_{0}}-\lambda}(1-t)}
{f_{fp,\vert\lambda\vert}(1-\widetilde{T}_{0})}
\dfrac{\vert\lambda\vert}{1-\widetilde{T}_{0}}
\exp\left(-\dfrac{\lambda^{2}}{2(1-\widetilde{T}_{0})}\right)\,d\lambda
\end{equation}
In above expression we integrate with respect to the density \eqref{CondDensEndpoint} the function:
\begin{displaymath}
1_{\lambda<M'_{t-\widetilde{T}_{0}}}\dfrac{f_{fp,B'_{t-\widetilde{T}_{0}}-\lambda}(1-t)}
{f_{fp,\vert\lambda\vert}(1-\widetilde{T}_{0})}
\end{displaymath}
which is the density corresponding to a Brownian first passage bridge from $0$ to $\lambda$ of length $1-\widetilde{T}_{0}$. \eqref{DensityReste} rewrites as:
\begin{multline*}
\sqrt{\dfrac{1-\widetilde{T}_{0}}{(1-t)^{3}}}
\int_{-\infty}^{M'_{t-\widetilde{T}_{0}}}(B'_{t-\widetilde{T}_{0}}-\lambda)
\exp\left(-\dfrac{(B'_{t-\widetilde{T}_{0}}-\lambda)^{2}}{2(1-t)}\right)\,d\lambda\\
=\sqrt{\dfrac{1-\widetilde{T}_{0}}{1-t}}
\exp\left(-\dfrac{(B'_{t-\widetilde{T}_{0}}-M'_{t-\widetilde{T}_{0}})^{2}}{2(1-t)}\right)
\end{multline*}
Applying Girsanov's theorem, we get the result of the lemma.
\end{proof}

Next we will deal with the semi-martingale decomposition of $(V(B)_{t\wedge\widetilde{T}_{0}})_{0\leq t\leq 1}$.
As an auxiliary problem we will study first the semi-martingale decomposition of a process $(\xi_{t})_{t\geq 0}$ defined as follows: with probability $\frac{1}{2}$, $\xi$ is a Bessel $3$ process starting from $0$. For $t\in (0,1)$, with infinitesimal probability $\frac{dt}{2\pi\sqrt{t(1-t)}}$, $\xi$ is a positive excursion of length $t$, absorbed at $0$ after time $t$. For any $t\in(0,1)$, the law of $(V(B)_{s\wedge\widetilde{T}_{0}})_{0\leq s\leq s}$ is absolutely continuous with respect the law of $(\xi_{s})_{0\leq s\leq t}$. 

\begin{lem}
\label{LemXi}
Let
\begin{displaymath}
T^{\xi}_{0}:=\inf\lbrace t>0\vert \xi_{t}=0\rbrace
\end{displaymath}
Let
\begin{displaymath}
J_{t}(y):=\int_{t\leq s\leq 1}\dfrac{ds}{\pi\sqrt{s(1-s)}}\dfrac{\tilde{q}_{s-t}(0,y)}{\tilde{q}_{s}(0,0)}
\end{displaymath}
\begin{displaymath}
\dot{J}_{t}(y):=\int_{t\leq s\leq 1}\dfrac{ds}{\pi(s-t)\sqrt{s(1-s)}}\dfrac{\tilde{q}_{s-t}(0,y)}
{\tilde{q}_{s}(0,0)}
\end{displaymath}
The process
\begin{displaymath}
\label{StoppedBM}
(Y_{t})_{t\geq 0}:=\left(\xi_{t}-\int_{0}^{t\wedge T^{\xi}_{0}}\dfrac{ds}{\xi_{s}}+
\int_{0}^{t\wedge T^{\xi}_{0}\wedge 1}\dfrac{\xi_{s}\dot{J}_{s}(\xi_{s})}{1+J_{s}(\xi_{s})}
\,ds\right)_{t\geq 0}
\end{displaymath}
is a Brownian motion with respect the filtration of $\xi$, stopped at time $T^{\xi}_{0}$.
\end{lem}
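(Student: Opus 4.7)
The plan is to identify the Radon-Nikodym density of the law of $\xi$ stopped at $T^{\xi}_{0}$ with respect to the Bessel $3$ law on $[0,t]$, and then apply Girsanov's theorem to turn the stochastic logarithm of this density into the drift correction appearing in $Y_{t}$.

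First I would decompose the law of $\xi$ restricted to $\{T^{\xi}_{0}>t\}$ into its Bessel $3$ component (probability $1/2$) and its excursion components of length $s\in(t,1)$ (infinitesimal weight $ds/(2\pi\sqrt{s(1-s)})$). A positive Brownian excursion of length $s$ restricted to $[0,t]$ is a Bessel $3$ bridge from $0$ to $0$ of length $s$ stopped at time $t$, whose density with respect to the Bessel $3$ law is $\tilde{q}_{s-t}(R_{t},0)/\tilde{q}_{s}(0,0)=\tilde{q}_{s-t}(0,R_{t})/\tilde{q}_{s}(0,0)$. Summing the contributions yields
\begin{displaymath}
D^{\xi}_{t}=\dfrac{1}{2}+\int_{t}^{1}\dfrac{ds}{2\pi\sqrt{s(1-s)}}\dfrac{\tilde{q}_{s-t}(0,R_{t})}{\tilde{q}_{s}(0,0)}=\dfrac{1}{2}(1+J_{t}(R_{t}))
\end{displaymath}

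Next I would verify that $J_{t}(R_{t})$ is a local martingale under the Bessel $3$ law. Each integrand $F_{s}(t,y):=\tilde{q}_{s-t}(0,y)/\tilde{q}_{s}(0,0)$ is a bridge density, hence satisfies the backward PDE $\partial_{t}F+(1/y)\partial_{y}F+(1/2)\partial_{yy}F=0$ for $y>0$. Differentiating $J$ under the integral, the boundary term at $s=t$ vanishes for $y>0$ thanks to the Gaussian decay of $F_{s}(t,y)$ as $s\to t^{+}$, so $J$ satisfies the same PDE. Ito's formula then gives
\begin{displaymath}
dD^{\xi}_{t}=\dfrac{1}{2}\partial_{y}J_{t}(R_{t})\,dB_{t},\qquad B_{t}=R_{t}-\int_{0}^{t}\dfrac{du}{R_{u}}
\end{displaymath}
and the direct computation $\partial_{y}\tilde{q}_{s-t}(0,y)=-(y/(s-t))\tilde{q}_{s-t}(0,y)$ yields the key identity $\partial_{y}J_{t}(y)=-y\dot{J}_{t}(y)$. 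Girsanov's theorem then shows that under $\mathbb{P}_{\xi}$ (restricted to $\{T^{\xi}_{0}>t\}$) the process
\begin{displaymath}
\tilde{B}_{t}:=B_{t}-\int_{0}^{t}\dfrac{\partial_{y}J_{s}(R_{s})}{1+J_{s}(R_{s})}\,ds=B_{t}+\int_{0}^{t}\dfrac{R_{s}\dot{J}_{s}(R_{s})}{1+J_{s}(R_{s})}\,ds
\end{displaymath}
is a Brownian motion; substituting the expression for $B_{t}$ and renaming $R$ as $\xi$ under $\mathbb{P}_{\xi}$ produces exactly $Y_{t}$ stopped at $T^{\xi}_{0}$.

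The main obstacle is the technical handling of this Girsanov step. A quick moment computation shows $\mathbb{E}[D^{\xi}_{t}]=1/2+(1/2)\int_{t}^{1}ds/(\pi\sqrt{s(1-s)})$, which decreases from $1$ at $t=0$ to $1/2$ at $t=1$, so $D^{\xi}$ is a strict local martingale rather than a true martingale, consistent with $\mathbb{P}_{\xi}|_{\{T^{\xi}_{0}>t\}}$ being a sub-probability measure of mass $\mathbb{P}_{\xi}(T^{\xi}_{0}>t)$. Girsanov still applies in this sub-probability setting, but a clean proof requires localization along $\tau_{\epsilon}:=\inf\{s:R_{s}\leq\epsilon\}$ (then letting $\epsilon\to 0$) to handle the $1/y$ singularity of $J_{t}$ near $y=0$ and to ensure integrability of the drift.
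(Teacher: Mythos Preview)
Your approach is sound in outline but takes a genuinely different route from the paper. You compare $\xi$ on $[0,t]$ to the Bessel $3$ process and obtain the sub-probability density $D^{\xi}_{t}=\tfrac{1}{2}(1+J_{t}(R_{t}))$, which is a strict nonnegative local martingale; the paper instead compares $(\xi_{s})_{\varepsilon\le s\le t}$ to a Brownian motion started at time $\varepsilon$ from the law of $\xi_{\varepsilon\wedge T^{\xi}_{0}}$ and stopped at $0$. Both routes isolate the same key computation $\partial_{y}J_{t}(y)=-y\dot{J}_{t}(y)$ and then invoke Girsanov. The trade-off is this: your Bessel $3$ reference matches the entrance law at $t=0$ for free, but cannot see the event $\{T^{\xi}_{0}\le t\}$, so the density loses mass and Girsanov only gives you a local martingale on $\{T^{\xi}_{0}>t\}$. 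The paper's stopped-Brownian reference is singular at $t=0$ (hence the time shift $\varepsilon$), but it is mutually absolutely continuous with $\xi$ on $[\varepsilon,t]$ \emph{including} after absorption, so the density $\mathfrak{D}^{\varepsilon}_{t}$ is a genuine martingale and Girsanov immediately yields that $(Y_{t})_{t\ge\varepsilon}$ is a martingale with quadratic variation $(t-\varepsilon)\wedge(T^{\xi}_{0}-\varepsilon)^{+}$, i.e.\ already a stopped Brownian motion; letting $\varepsilon\downarrow 0$ finishes.

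Where your sketch is thin is exactly the endgame you flag: passing from ``$Y$ is a local martingale on $\{T^{\xi}_{0}>t\}$'' to ``$Y$ is a Brownian motion stopped at $T^{\xi}_{0}$''. Your localization $\tau_{\epsilon}=\inf\{s:R_{s}\le\epsilon\}$ is awkward because $R_{0}=0$; to make it work you would in practice also shift to a positive starting time, at which point the argument converges toward the paper's. More substantively, you still need to show that the two drift integrals in $Y_{t}$ remain finite as $t\uparrow T^{\xi}_{0}$ on the excursion event, so that $Y$ extends continuously to $T^{\xi}_{0}$ and the local martingale property survives in the limit. This is true but not automatic from what you wrote; the paper sidesteps it entirely because its density $\mathfrak{D}^{\varepsilon}_{t}$ is continuous across $T^{\varepsilon}_{0}$ (it checks this explicitly via an approximation-to-identity argument for $\tfrac{y}{2}\tilde{q}_{u}(0,y)\,du$).
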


\begin{proof}
We would like to emphasize that $T^{\xi}_{0}$ is a stopping time for $\xi$ but not for process $Y$.

Let $\varepsilon\in(0,1)$. We introduce $(B^{\varepsilon}_{t})_{t\geq 0}$ a Brownian motion with the starting point $B^{\varepsilon}_{0}$ having the same distribution as $\xi_{\varepsilon\wedge T^{\xi}_{0}}$. Let $\mu_{\varepsilon}$ be the density of this distribution on $(0,+\infty)$ (total mass $<1$).
\begin{displaymath}
\mu_{\varepsilon}(x)=\dfrac{\tilde{q}_{\varepsilon}(0,x)x^{2}}{2}
\left(1+J_{\varepsilon}(x)\right)
\end{displaymath}
Let $T^{\varepsilon}_{0}$ be the first time $B^{\varepsilon}$ hits $0$. For any $t\varepsilon$, the law of $(\xi_{s})_{0\leq s\leq t}$ is absolutely continuous with respect the law $(B^{\varepsilon}_{(s-\varepsilon)\wedge T^{\varepsilon}_{0}})_{\varepsilon\leq s\leq t}$. The corresponding density is:

\begin{equation*}
\begin{split}
\mathfrak{D}^{\varepsilon}_{t}=&1_{B^{\varepsilon}_{0}=0}+\dfrac{\tilde{q}_{\varepsilon}(0,B^{\varepsilon}_{0})
B^{\varepsilon 2}_{0}\tilde{q}_{T^{\varepsilon}_{0}}(0,B^{\varepsilon}_{0})}
{\tilde{q}_{T^{\varepsilon}_{0}+\varepsilon}(0,0)2\pi\sqrt{T^{\varepsilon}_{0}(1-T^{\varepsilon}_{0})}
\mu_{\varepsilon}(B^{\varepsilon}_{0})f_{fp,B^{\varepsilon}_{0}}(T^{\varepsilon}_{0})}\,
1_{T^{\varepsilon}_{0}\leq t-\varepsilon, B^{\varepsilon}_{0}>0}\\
+&\dfrac{1_{T^{\varepsilon}_{0}>t-\varepsilon}}{\mu_{\varepsilon}(B^{\varepsilon}_{0})B^{\varepsilon}_{0}B^{\varepsilon}_{t-\varepsilon}\tilde{q}_{t-\varepsilon}(B^{\varepsilon}_{0},B^{\varepsilon}_{t-\varepsilon})}\times
\dfrac{\tilde{q}_{\varepsilon}(0,B^{\varepsilon}_{0})B^{\varepsilon 2}_{0}
\tilde{q}_{t-\varepsilon}(B^{\varepsilon}_{0},B^{\varepsilon}_{t-\varepsilon})B^{\varepsilon 2}_{t-\varepsilon}}{2}\\
\times&\left(1+\int_{t}^{1}\dfrac{ds}{\pi\sqrt{s(1-s)}}\dfrac{\tilde{q}_{t-s}
(0,B^{\varepsilon}_{t-\varepsilon})}{\tilde{q}_{s}(0,0)}\right)\\
=&1_{B^{\varepsilon}_{0}=0}+\dfrac{\tilde{q}_{\varepsilon}(0,B^{\varepsilon}_{0})
B^{\varepsilon}_{0}}
{\mu_{\varepsilon}(B^{\varepsilon}_{0})\pi\sqrt{T^{\varepsilon}_{0}(1-T^{\varepsilon}_{0})}
\tilde{q}_{T^{\varepsilon}_{0}+\varepsilon}(0,0)}\,
1_{T^{\varepsilon}_{0}\leq t-\varepsilon, B^{\varepsilon}_{0}>0}\\
+&\dfrac{1_{T^{\varepsilon}_{0}>t-\varepsilon}\tilde{q}_{\varepsilon}(0,B^{\varepsilon}_{0})B^{\varepsilon}_{0}
B^{\varepsilon}_{t-\varepsilon}}{2\mu_{\varepsilon}(B^{\varepsilon}_{0})}\times
\left(1+J_{t}(B^{\varepsilon}_{t-\varepsilon})\right)
\end{split}
\end{equation*}
$(\mathfrak{D}^{\varepsilon}_{t})_{t\geq 0}$ seen as time dependent process is continuous. In particular there is no discontinuity at $T^{\varepsilon}_{0}$. This follows from that fact that as $y$ tends to $0$, the convolution kernel
\begin{displaymath}
\dfrac{y}{2}\tilde{q}_{u}(0,y)\,1_{u>0}\,du
\end{displaymath}
is an approximation to the identity. Since for $t\in(0,1)$
\begin{displaymath}
\dfrac{\partial J_{t}(y)}{\partial y}=-y\dot{J}_{t}(y)
\end{displaymath}
applying Girsanov's theorem we get that $(Y_{t})_{t\geq \varepsilon}$ is a continuous martingale relatively the filtration of $(\xi_{t})_{t\geq \varepsilon}$ with quadratic variation $(t-\varepsilon)\wedge
(T^{\xi}_{0}-\varepsilon)^{+}$. Since this holds for all $\varepsilon$ sufficiently small, this implies the lemma.
\end{proof}

We introduce the functionals $F(t,\gamma)$ and $\dot{F}(t,\gamma)$ where $t$ is a time and $\gamma$ a continuous path:
\begin{displaymath}
F(t,\gamma):=\dfrac{2}{\sqrt{2\pi}}\int_{0}^{+\infty}
F^{\lambda}(t,\gamma(t),\sup\lbrace s\in [0,t]\vert \gamma(s)\leq \lambda\rbrace)
\exp\left(-\dfrac{\lambda^{2}}{2}\right)\,d\lambda
\end{displaymath}
\begin{displaymath}
\dot{F}(t,\gamma):=\dfrac{2}{\sqrt{2\pi}}\int_{0}^{+\infty}
\partial_{2}F^{\lambda}(t,\gamma(t),\sup\lbrace s\in [0,t]\vert \gamma(s)\leq \lambda\rbrace)
\exp\left(-\dfrac{\lambda^{2}}{2}\right)\,d\lambda
\end{displaymath}
For any $t\in(0,1)$, the law of $(V(B)_{s\wedge\widetilde{T}_{0}})_{0\leq s\leq s}$ is absolutely continuous with respect the law of $(\xi_{s})_{0\leq s\leq t}$ with density
\begin{equation}
\label{FinalDensity}
D_{t}=1_{T^{\xi}_{0}\leq t}+\dfrac{F(t,\xi)+J_{t}(\xi_{t})}{1+J_{t}(\xi_{t})}\,1_{T^{\xi}_{0}> t}
\end{equation}

\begin{lem}
\label{LemBound}
There is a positive function $c(t)$ bounded on intervals of form $[0,1-\varepsilon]$, such that for all 
$\lambda>0$, $y>0$, $\theta\leq t\in [0,1)$:
\begin{displaymath}
F^{\lambda}(t,y,\theta)\leq c(t)\exp\left(\frac{\lambda^{2}}{2}\right)
\exp\left(-\dfrac{(y-\lambda)^{2}}{2(1-t)}\right)
\end{displaymath} 
\end{lem}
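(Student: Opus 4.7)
The plan is to split $F^{\lambda}(t,y,\theta) = F^{1,\lambda}(t,y) + (1-\theta)(0 \vee F^{2,\lambda}(t,y))$ and estimate each summand separately by a time-dependent constant times $\exp(\lambda^{2}/2)\exp(-(y-\lambda)^{2}/(2(1-t)))$; summing the two constants furnishes the required $c(t)$.

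The $F^{2,\lambda}$ piece is the easier one. The function $F^{2,\lambda}(t,y)$ is nonnegative only when $y>\lambda$, and on this region the algebraic prefactor $(y-\lambda)/y$ is at most $1$. Combining this with $1-\theta\leq 1$ directly in the explicit formula for $F^{2,\lambda}$ yields
\begin{displaymath}
(1-\theta)(0\vee F^{2,\lambda}(t,y))\leq \dfrac{1}{\sqrt{(1-t)^{3}}}\exp\left(\dfrac{\lambda^{2}}{2}\right)\exp\left(-\dfrac{(y-\lambda)^{2}}{2(1-t)}\right).
\end{displaymath}

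For $F^{1,\lambda}$, I would start from the closed form derived via Lemma \ref{LemIntegrale}, namely
\begin{displaymath}
F^{1,\lambda}(t,y)=\dfrac{\exp(\lambda^{2}/2)}{2\sqrt{2}\,y}\int_{a}^{b}\dfrac{e^{-u}}{\sqrt{u}}\,du,\qquad a:=\dfrac{(y-\lambda)^{2}}{2(1-t)},\quad b:=\dfrac{(y+\lambda)^{2}}{2(1-t)}.
\end{displaymath}
Bounding $e^{-u}\leq e^{-a}$ on $[a,b]$ immediately extracts the desired Gaussian factor $\exp(-(y-\lambda)^{2}/(2(1-t)))$ and leaves the integral $\int_{a}^{b}du/\sqrt{u}=2(\sqrt{b}-\sqrt{a})$. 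The step I regard as the main (though still elementary) point is the identity
\begin{displaymath}
\sqrt{b}-\sqrt{a}=\min(y,\lambda)\sqrt{2/(1-t)},
\end{displaymath}
which is checked by a case distinction on the sign of $y-\lambda$. Combined with $\min(y,\lambda)/y\leq 1$, this produces the estimate $F^{1,\lambda}(t,y)\leq (1-t)^{-1/2}\exp(\lambda^{2}/2)\exp(-(y-\lambda)^{2}/(2(1-t)))$.

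Summing the two bounds yields the claim with $c(t):=(1-t)^{-1/2}+(1-t)^{-3/2}$, which is clearly bounded on every interval $[0,1-\varepsilon]$. No real obstacle arises beyond the algebraic simplification of $\sqrt{b}-\sqrt{a}$; the rest is a direct monotone control of the integrand.
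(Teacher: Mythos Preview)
Your argument is correct and follows the paper's proof essentially verbatim: both split $F^{\lambda}=F^{1,\lambda}+(1-\theta)(0\vee F^{2,\lambda})$, bound the second piece via $(y-\lambda)/y\leq 1$, and handle the first by replacing $e^{-u}$ with $e^{-a}$ and computing $\int_{a}^{b}u^{-1/2}\,du=2(\sqrt{b}-\sqrt{a})=2\sqrt{2}\min(y,\lambda)/\sqrt{1-t}$. Your explicit constant $c(t)=(1-t)^{-1/2}+(1-t)^{-3/2}$ is in fact slightly sharper than the paper's (which records a harmless extra factor of $2$ on the first term).
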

\begin{proof}
$F^{\lambda}(t,y,\theta)\leq F^{1,\lambda}(t,y)+1_{y>\lambda}F_{2}(t,y)$. For $y>\lambda$:
\begin{displaymath}
F_{2}(t,y)\leq \dfrac{1}{\sqrt{(1-t)^{3}}}\exp\left(\frac{\lambda^{2}}{2}\right)
\exp\left(-\dfrac{(y-\lambda)^{2}}{2(1-t)}\right)
\end{displaymath}
For $F^{1,\lambda}(t,y)$ we have the inequality
\begin{equation*}
\begin{split}
F^{1,\lambda}(t,y)\leq&\dfrac{1}{2\sqrt{2}y}\exp\left(\frac{\lambda^{2}}{2}\right)
\exp\left(-\dfrac{(y-\lambda)^{2}}{2(1-t)}\right)
\int_{\frac{(y-\lambda)^{2}}{2(1-t)}}^{\frac{(y+\lambda)^{2}}{2(1-t)}}\dfrac{du}{\sqrt{u}}\\
=&\dfrac{2}{\sqrt{(1-t)}}\dfrac{\min(y,\lambda)}{y}
\exp\left(\frac{\lambda^{2}}{2}\right)
\exp\left(-\dfrac{(y-\lambda)^{2}}{2(1-t)}\right)
\end{split}
\end{equation*}
\end{proof}

\begin{lem}
\label{LemQuadVar}
For $t\in[0,1)$:
\begin{displaymath}
[F(\cdot ,\xi),\xi]_{t}=\int_{0}^{t\wedge T^{\xi}_{0}}\dot{F}(s,\xi)\,ds
\end{displaymath}
\end{lem}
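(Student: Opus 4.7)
The strategy is to prove the identity first for the Bessel $3$ process $R$ in place of $\xi$, using a stochastic Fubini argument on top of Lemma \ref{LemMartDec}, and then to transfer to $\xi$ by absolute continuity.

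For the Bessel $3$ process $R$ and each fixed $\lambda > 0$, Lemma \ref{LemMartDec} asserts that $F^{\lambda}(t, R_{t}, \theta^{\lambda}_{t})$ is a martingale with representation
\[
F^{\lambda}(t, R_{t}, \theta^{\lambda}_{t}) = 1 + \int_{0}^{t} \partial_{2} F^{\lambda}(s, R_{s}, \theta^{\lambda}_{s})\, dW_{s},
\]
so since $d[W, R]_{s} = ds$ one has $[F^{\lambda}(\cdot, R, \theta^{\lambda}), R]_{t} = \int_{0}^{t} \partial_{2} F^{\lambda}(s, R_{s}, \theta^{\lambda}_{s})\, ds$. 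The first step of the proof is to apply a stochastic Fubini theorem to interchange the stochastic integral with the weighted $\lambda$-integration $\frac{2}{\sqrt{2\pi}} \int_{0}^{+\infty} \cdots e^{-\lambda^{2}/2}\, d\lambda$. This yields that $F(\cdot, R)$ is a martingale with decomposition $F(t, R) = 1 + \int_{0}^{t} \dot{F}(s, R)\, dW_{s}$, whence $[F(\cdot, R), R]_{t} = \int_{0}^{t} \dot{F}(s, R)\, ds$.

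The second step is to transfer the identity from $R$ to $\xi$. For any $t < 1$, on the event $\{T^{\xi}_{0} > t\}$ the law of $(\xi_{s})_{0 \leq s \leq t}$ is equivalent to that of $(R_{s})_{0 \leq s \leq t}$ (both components of the mixture defining $\xi$ are absolutely continuous with Bessel $3$ strictly before the hitting time of $0$, as in the density computations performed in the proof of Lemma \ref{LemXi}). Since $F(\cdot, \gamma)$ depends only on the path $\gamma$, and since the quadratic covariation is a pathwise limit of Riemann sums in probability, hence unchanged under equivalent changes of measure, the identity $[F(\cdot, \xi), \xi]_{t} = \int_{0}^{t} \dot{F}(s, \xi)\, ds$ holds on $\{t < T^{\xi}_{0}\}$. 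After $T^{\xi}_{0}$ the process $\xi$ is absorbed at $0$, so both sides of the claimed identity remain constant in $t$, yielding the stopped version stated.

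The main obstacle is the stochastic Fubini step: one needs joint integrability of $\partial_{2} F^{\lambda}(s, R_{s}, \theta^{\lambda}_{s})$ against $e^{-\lambda^{2}/2}\, d\lambda\, ds\, d\mathbb{P}$ on $[0, t] \times (0, +\infty) \times \Omega$ for each $t < 1$. Lemma \ref{LemBound} controls $F^{\lambda}$ itself; differentiating the explicit expressions for $F^{1,\lambda}$ and $F^{2,\lambda}$ yields analogous Gaussian-type bounds on $\partial_{2} F^{\lambda}$ away from $y = \lambda$. Particular care is required near $y = \lambda$, where $\partial_{2} F^{1,\lambda}$ has the jump computed after Lemma \ref{LemIntegrale} (but this is a set of zero occupation measure for $R$ outside the local time contributions, which have already been absorbed in the martingale representation of Lemma \ref{LemMartDec}), and near $y = 0$, where the factor $1/y$ in $F^{1,\lambda}$ is tamed by the fact that Bessel $3$ spends little time near $0$; these singularities remain integrable against the Gaussian weight, which closes the Fubini step.
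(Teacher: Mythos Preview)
Your proposal is correct and follows the same overall architecture as the paper's proof: reduce from $\xi$ to the Bessel $3$ process $R$ by absolute continuity (the paper does this at the outset, noting that $[F(\cdot,\xi),\xi]$ is constant after $T^{\xi}_{0}$ and that the law of $\xi$ before $T^{\xi}_{0}$ is equivalent to that of $R$), and then establish the covariation formula for $R$ by a Fubini-type argument over $\lambda$ built on Lemma~\ref{LemMartDec}.

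The one substantive difference is how the Fubini step is justified. You propose to bound $\partial_{2}F^{\lambda}$ directly by differentiating the explicit formulas for $F^{1,\lambda}$ and $F^{2,\lambda}$, which is feasible but leaves you handling the $1/y$ singularity at $y=0$ and the jump at $y=\lambda$ by hand. The paper avoids this entirely: since $F^{\lambda}(t,R_{t},\theta^{\lambda}_{t})=1+\int_{0}^{t}\partial_{2}F^{\lambda}\,dW_{s}$, the It\^{o} isometry gives
\[
\mathbb{E}\!\left[\int_{0}^{t}(\partial_{2}F^{\lambda})^{2}\,ds\right]
=\mathbb{E}\!\left[F^{\lambda}(t,R_{t},\theta^{\lambda}_{t})^{2}\right]-1,
\]
and the right-hand side is controlled by Lemma~\ref{LemBound} alone, with no need to look at $\partial_{2}F^{\lambda}$ directly. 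The paper then shows, for each $\lambda$, that $F^{\lambda}W_{t}-\int_{0}^{t}\partial_{2}F^{\lambda}\,ds$ is a true (not merely local) martingale with an $L^{1}$ bound that is integrable against $e^{-\lambda^{2}/2}\,d\lambda$, and applies ordinary Fubini to conclude that $F(t,R)W_{t}-\int_{0}^{t}\dot{F}(s,R)\,ds$ is a martingale, which identifies the covariation. Your stochastic-Fubini route and the paper's product-martingale route are equivalent in content; the paper's trick of trading bounds on $\partial_{2}F^{\lambda}$ for bounds on $F^{\lambda}$ via the isometry is what makes the argument clean.
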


\begin{proof}
It clear that the quadratic variation $[F(\cdot ,\xi),\xi]_{t}$ does not increase for $t\geq T^{\xi}_{0}$. We need only to show that for a Bessel $3$ process $(R_{t})_{t\geq 0}$
\begin{equation}
\label{EqQuadVar}
[F(\cdot ,R),R]_{t}=\int_{0}^{t}\dot{F}(s,R)\,ds
\end{equation}
Indeed, given any $T\in (0,1)$ and $t\in [0,T)$, the law of $(\xi_{s})_{0\leq s\leq t}$ on the event $T^{\xi}_{0}>T$ is absolutely continuous with respect the law of $(R_{s})_{0\leq s\leq t}$.

For any $\lambda>0$ $(F^{\lambda}(t,R_{t},\theta^{\lambda}_{t})_{0\leq t<1}$ is a positive martingale with mean $1$. Applying Fubini's theorem, we get that $(F(t,R))_{0\leq t<1}$ is a positive martingale with mean $1$. Let 
$(W_{t})_{t\geq 0}$ be the Brownian motion martingale part of $(R_{t})_{t\geq 0}$. To prove \eqref{EqQuadVar} we need only to show that the process
\begin{equation}
\label{ShowTrueMart}
\left(F(t,R)W_{t}-\int_{0}^{t}\dot{F}(s,R)\,ds\right)_{0\leq t<1}
\end{equation}
is a (true) martingale. Lemma \ref{LemMartDec} ensures that for any $\lambda>0$ the process
\begin{equation}
\label{EqMartLoc}
\left(F^{\lambda}(t,R_{t},\theta^{\lambda}_{t})W_{t}-\int_{0}^{t}
\partial_{2}F^{\lambda}(s,R_{s},\theta^{\lambda}_{s})\,ds\right)_{0\leq t<1}
\end{equation}
is a local martingale. Moreover from the bound of lemma \ref{LemBound} follows that $(F^{\lambda}(t,R_{t},\theta^{\lambda}_{t}))_{0\leq t<1}$ is a square integrable martingale and
\begin{equation*}
\begin{split}
\mathbb{E}\left[\int_{0}^{t}
\partial_{2}F^{\lambda}(s,R_{s},\theta^{\lambda}_{s})^{2}\,ds\right]=&
\mathbb{E}\left[F^{\lambda}(t,R_{t},\theta^{\lambda}_{t})^{2}\right]\\\leq&
c(t)^{2}\exp\left(\lambda^{2}\right)
\mathbb{E}\left[\exp\left(-\dfrac{(R_{t}-\lambda)^{2}}{(1-t)}\right)\right]
\end{split}
\end{equation*}
Thus
\begin{multline*}
\mathbb{E}\left[\left\vert
F^{\lambda}(t,R_{t},\theta^{\lambda}_{t})W_{t}-\int_{0}^{t}
\partial_{2}F^{\lambda}(s,R_{s},\theta^{\lambda}_{s})\,ds
\right\vert\right]\leq
2\sqrt{t}\mathbb{E}\left[
\int_{0}^{t}\partial_{2}F^{\lambda}(s,R_{s},\theta^{\lambda}_{s})^{2}\,ds
\right]^{\frac{1}{2}}\\
\leq 2\sqrt{t}c(t)\exp\left(\dfrac{\lambda^{2}}{2}\right)
\mathbb{E}\left[\exp\left(-\dfrac{(R_{t}-\lambda)^{2}}{(1-t)}\right)\right]^{\frac{1}{2}}
\end{multline*}
It follows that for any $\lambda >0$, the local martingale \eqref{EqMartLoc} is a true martingale and the expectation of its absolute value is integrable with respect $\dfrac{2}{\sqrt{2\pi}}\exp\left(-\dfrac{\lambda^{2}}{2}\right)\,1_{\lambda >0}\,d\lambda$. By Fubini's theorem, it follows that \eqref{ShowTrueMart} is a true martingale.
\end{proof}

\begin{prop}
The process
\begin{equation*}
\begin{split}
\Bigg(V(B)_{t}-&\int_{0}^{t\wedge\widetilde{T}_{0}}\dfrac{ds}{V(B)_{s}}+
\int_{0}^{t\wedge\widetilde{T}_{0}}\dfrac{\dot{F}(s,V(B))+
V(B)_{s}\dot{J}_{s}(V(B)_{s})}{F(s,V(B))+J_{s}(V(B)_{s})}\,ds
\\+&\int_{\widetilde{T}_{0}\leq s\leq t}\dfrac{V(B)_{s}-\widetilde{M}_{s}}{1-s}\,ds\Bigg)_{0\leq t\leq 1}
\end{split}
\end{equation*}
is a Brownian motion.
\end{prop}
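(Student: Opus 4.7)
The plan is to split at the random time $\widetilde{T}_{0}$ and treat the segments $[0,\widetilde{T}_{0}]$ and $[\widetilde{T}_{0}, 1]$ separately. On the post-$\widetilde{T}_{0}$ segment, Lemma \ref{LemEqQueue} already identifies, conditionally on $\widetilde{T}_{0}$, the desired Brownian motion. The remaining task is to establish the analogous statement on $[0, \widetilde{T}_{0}]$ and then to concatenate the two pieces.

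For the pre-$\widetilde{T}_{0}$ segment, I would use equation \eqref{FinalDensity}, which expresses the law of $(V(B)_{s\wedge \widetilde{T}_{0}})_{0\le s\le t}$ as an absolutely continuous perturbation, with density $D_t$, of the law of $(\xi_s)_{0\le s\le t}$, together with Lemma \ref{LemXi}, which identifies a Brownian motion $Y$ driving $\xi$ up to $T^{\xi}_0$. Girsanov's theorem then implies that, under the pushforward measure, $Y_t - \int_0^t d[Y,\log D]_s$ is a Brownian motion. The central computation is the covariation $d[Y,\log D]$. On $\{T^{\xi}_0 > t\}$, $\log D_t = \log(F(t,\xi) + J_t(\xi_t)) - \log(1 + J_t(\xi_t))$; Lemma \ref{LemQuadVar} supplies $d[F(\cdot,\xi),Y]_t = \dot F(t,\xi)\,dt$, and Ito's formula combined with the identity $\partial_y J_t(y) = -y\dot J_t(y)$ (used inside the proof of Lemma \ref{LemXi}) gives $d[J_\cdot(\xi_\cdot),Y]_t = -\xi_t \dot J_t(\xi_t)\,dt$. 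Substituting these in $\log D$ and combining with the intrinsic drift of $Y$ from Lemma \ref{LemXi}, the contributions involving $\xi \dot J/(1+J)$ cancel between the denominator of $D$ and the definition of $Y$, leaving precisely the drift
$$-\int_0^{t\wedge \widetilde{T}_{0}}\frac{ds}{V(B)_s} + \int_0^{t\wedge \widetilde{T}_{0}}\frac{\dot F(s,V(B)) + V(B)_s \dot J_s(V(B)_s)}{F(s,V(B)) + J_s(V(B)_s)}\,ds$$
from the statement. On $\{T^{\xi}_0 \le t\}$, $D_t = 1$, so no further drift is contributed, which matches the cutoff of the first two integrals at $\widetilde{T}_{0}$.

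For the concatenation, the candidate process $X_t$ in the statement is continuous on $[0,1]$ with $X_0 = 0$. By the pre-$\widetilde{T}_{0}$ Girsanov step and by Lemma \ref{LemEqQueue} on the post-$\widetilde{T}_{0}$ interval, $X$ is a continuous local martingale in its own filtration (using that $\widetilde{T}_{0}$ is a stopping time for $V(B)$, hence for $X$) with quadratic variation $[X,X]_t = t$. L\'evy's characterization then yields that $X$ is a standard Brownian motion on $[0,1]$.

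The main obstacle is the covariation calculation $d[Y,\log D]$: $D_t$ mixes the functional $F$ with the explicit function $J$, and their contributions must be combined carefully, with the eventual cancellation against the intrinsic drift of $Y$ doing most of the simplification. One must also verify that $D_t$ is continuous across $\{t = T^{\xi}_0\}$ so that no extra local-time contributions enter the Girsanov drift. The concatenation step is, by contrast, essentially L\'evy's theorem applied piecewise, with the only subtlety being that $\widetilde{T}_{0}$ is a random but $X$-adapted stopping time.
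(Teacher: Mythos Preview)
Your proposal is correct and follows essentially the same route as the paper: Girsanov applied to the density \eqref{FinalDensity} on top of the $\xi$-decomposition of Lemma \ref{LemXi}, with Lemma \ref{LemQuadVar} supplying the covariation and Lemma \ref{LemEqQueue} handling the post-$\widetilde{T}_{0}$ piece, followed by L\'evy's characterization. The paper makes the continuity of $D_t$ at $T^{\xi}_0$ explicit via Lemma \ref{LemBound} (bounding $F(t,\xi)$ while $J_t(\xi_t)\to+\infty$, so the ratio tends to $1$), which is exactly the verification you flag as needed.
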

\begin{proof}
The density process $(D_{t})_{0\leq t\leq 1}$ given by \eqref{FinalDensity} is time-continuous. In particular it follows from lemma \ref{LemBound} that on the event $T^{\xi}_{0}<1$, as $t$ converges to $T^{\xi}_{0}$ from below and $\xi_{t}$ converges to $0$, $F(t,\xi)$ remains bounded. Besides $J_{t}(\xi_{t})$ tends to $+\infty$ at $T^{\xi}_{0}$. Hence
\begin{displaymath}
\lim_{t\rightarrow T^{\xi}_{0}}\dfrac{F(t,\xi)+J_{t}(\xi_{t})}{1+J_{t}(\xi_{t})}=1
\end{displaymath}
and $D_{t}$ is continuous as $T^{\xi}_{0}$.

Using the semi-martingale decomposition of $(\xi_{t})_{t\geq 0}$ given by lemma \ref{LemXi}, applying the Girsanov's theorem together with lemma \ref{LemQuadVar} we get that the process
\begin{displaymath}
\left(V(B)_{t\wedge\widetilde{T}_{0}}-\int_{0}^{t\wedge\widetilde{T}_{0}}\dfrac{ds}{V(B)_{s}}+
\int_{0}^{t\wedge\widetilde{T}_{0}}\dfrac{\dot{F}(s,V(B))+
V(B)_{s}\dot{J}_{s}(V(B)_{s})}{F(s,V(B))+J_{s}(V(B)_{s})}\,ds\right)_{t\geq 0}
\end{displaymath}
is a martingale with quadratic variation $t\wedge\widetilde{T}_{0}$. Lemma \ref{LemEqQueue} describes the semi-martingale decomposition of $V(B)$ after the stopping time $\widetilde{T}_{0}$.
\end{proof}

\bibliographystyle{plain}
\bibliography{tituslupu}
\end{document}